
\documentclass[10pt]{elsarticle}



\usepackage{charter} 
\usepackage{etex}
\usepackage{ifpdf}
\usepackage{amsmath} 
\usepackage{enumerate}
\usepackage{graphicx,subfigure,epstopdf,epsfig}
\usepackage[hang,flushmargin]{footmisc} 
\usepackage{sansmath}
\usepackage{color,soul}
\sethlcolor{yellow}
\usepackage{hyperref}
\usepackage{amssymb}                  
\usepackage[all]{xy}
\usepackage[width=16cm,height=24cm,top=2cm]{geometry}

\newtheorem{thm}{Theorem}

\newtheorem{prop}{Proposition}
\newtheorem{lem}{Lemma}

\newtheorem{as}{Assumption}
\usepackage{algorithm,algpseudocode} 
\algnewcommand\algorithmicinput{\textbf{Input:}}
\algnewcommand\Input{\item[\algorithmicinput]}
\algnewcommand\algorithmicoutput{\textbf{Output:}}
\algnewcommand\Output{\item[\algorithmicoutput]}
\newcommand{\norm}[1]{\left\lVert#1\right\rVert}
\newenvironment{proof}{\paragraph{Proof:}}{\hfill$\square$}

\DeclareMathOperator{\spn}{span}

\newcommand{\vertiii}[1]{{\left\vert\kern-0.25ex\left\vert\kern-0.25ex\left\vert #1 
    \right\vert\kern-0.25ex\right\vert\kern-0.25ex\right\vert}}
    
\usepackage{graphicx,subfigure,epstopdf,epsfig}
\usepackage[font=small,labelfont=bf]{caption}
\usepackage{booktabs,siunitx}
\usepackage{multirow}

\newcounter{defcounter}
\setcounter{defcounter}{0}
\newenvironment{myequation}{
\addtocounter{equation}{-1}
\refstepcounter{defcounter}

\begin{equation}}
{\end{equation}}

\begin{document}

\begin{frontmatter}
\title{Algorithms and analyses for stochastic optimization for turbofan noise reduction using parallel reduced-order modeling}
\author{Huanhuan Yang}
\ead{hyang3@fsu.edu}
\author{Max Gunzburger}
\ead{mgunzburger@fsu.edu}
\address{Department of Scientific Computing, Florida State University, Tallahassee, FL 32306, USA}

\begin{abstract}
Simulation-based optimization of acoustic liner design in a turbofan engine nacelle for noise reduction purposes can dramatically reduce the cost and time needed for experimental designs. Because uncertainties are inevitable in the design process, a stochastic optimization algorithm is posed based on the conditional value-at-risk measure so that an ideal acoustic liner impedance is determined that is robust in the presence of uncertainties. A parallel reduced-order modeling framework is developed that  dramatically improves the computational
efficiency of the stochastic optimization solver for a realistic nacelle geometry. The reduced stochastic optimization solver takes less than 500 seconds to execute. In addition, well-posedness and finite element error analyses of the state system and optimization problem are provided. 
\end{abstract}

\begin{keyword}
stochastic Helmholtz equation \sep conditional value at risk \sep proper orthogonal decomposition  \sep turbofan noise reduction
\end{keyword}

\end{frontmatter}

\section{Introduction}
Aircraft noise is a major constraint on expanding and improving the air transport environment throughout the world. With the popularization of air transportation, aviation noise mitigation has always been an interesting topic for researchers and engineers \cite{Vanker2009, Biswas2013}. Noise emission at take-off and landing from the high-bypass turbofan, the only choice of engine for commercial aircrafts because of its lower fuel consumption \cite{Azimi2014}, is mainly contributed by the engine fan noise \cite{Kempton2011}. 
At take-off, the fan rotational speed is supersonic and this makes the noise (known as ``buzz-saw'' noise) propagate upstream the inlet \cite{McAlpine2007419}. During landing, the fan speed is low and the noise is caused by the interaction of the blades with the inlet flow.

The fan noise radiation can be effectively damped by the equipment of an optimally designed acoustic liner in the engine nacelle. To this end, one needs to address some design challenges including but not limited to the choice of acoustic liner material and layer structure. The  performance of acoustic liners can be evaluated in experiments by means of ground tests \cite{Schuster} or in dedicated experimental test rigs \cite{Ferrante}. Simulation-based optimization on the liner design, however, can dramatically reduce the experimental cost and time. In particular, simulations have been performed in \cite{Cao2007Estim} for the search of liner impedance factors, by solving an optimization problem towards minimizing fan noise radiation. In this paper, we still focus on the estimation of optimal liner impedance factors.

Mathematical models governed by partial differential equations (PDEs) often contain coefficients (or boundary condition data), such as the acoustic wavenumber in the Helmholtz equation for sound propagation, that are not exactly known due to incomplete knowledge or an inherent variability in the system. These uncertainties should be introduced into the model by treating the parameters as random variables. Optimization of the resulting stochastic system would be more complex than the deterministic one, but its accommodation to model uncertainties provides a more robust and realistic tool for practical application. In this paper, we take into account uncertainties on the acoustic wavenumber due to variability in the weather, and on the fan noise source due to incomplete knowledge. We formulate the optimization on the conditional value-at-risk (CVaR) measure \cite{Rockafellar00optimizationof}, which quantifies the conditional expectation of the sound energy provided that the sound is above a certain threshold. The optimization based on CVaR measure is expected to determine optimal impedance factor that are robust to uncertainty. Solving the stochastic optimization problem would facilitate the optimal acoustic liner design with different significance levels.

PDE constrained Optimization integrated with uncertainty quantification, although more reliable in application, is computationally formidable due to the inclusion of stochastic variables and therefore the dramatically
 increased number of realizations of deterministic PDEs. It is natural to apply a reduced-order modeling technique with the aim of dramatically reducing the computational cost of each realization. In this work, we apply the Proper Orthogonal Decomposition (POD) approach to the reduction of the Helmholtz model. The POD method has been used in many different fields such as fluid-structure interaction \cite{BertagnaVeneziani} and electrophysiology \cite{HHpodDEIM2016}, and has been applied to the Helmholtz equation for a different purpose \cite{Volkwein2011compHelm}.
POD is effective because of its optimal ability to approximate the snapshots with minimized error. To the best of our knowledge, it is also the most efficient method applying to the non-coercive Helmholtz equation, since it has no need of computing the inf-sup constant that is needed by the greedy reduced basis method. In fact, greedy reduced basis method applying to non-coercive elliptic problems is more involved than coercive problems \cite{SugataThesis2007}.

In the work, we build a parallel reduced-order modeling framework for the stochastic optimization to reduce the turbofan noise radiation. The simulations for fan noise propagation are performed on realistic geometry. Whereas the computation of the CVaR measure of the full-order Helmholtz solutions is forbidding, the reduced stochastic optimization problem can be solved within 500 seconds. Numerical experiments based on minimizing the CVaR measure indicate: with 95\% certainty the acoustic noise energy can be optimally controlled within 48.66\% of the noise level associated with the hard-wall condition without acoustic liner.

The paper is outlined as follows. We formulate a stochastic optimization problem in Section~\ref{sec:SPDEcontrol} that is based on the CVaR measure described in Section~\ref{CVaR-intr}. We also introduce notions about proper orthogonal decomposition (POD)-based reduced-order modeling (Section \ref{pod}) and then, in Section \ref{numScheme}, apply it together with a BFGS optimization strategy, to discrete acoustic liner optimization problem. Numerical illustrations of the accuracy and efficiency gains enabled by using the reduced-order model in a parallel processing environment are provided in Section~\ref{sec:resutls}. At last, mathematical and numerical analyses of the state equation and the optimization problem are provided in Section \ref{analysis}.


\section{The stochastic optimization problem}\label{sec:SPDEcontrol}

The engine inlet part of the turbofan nacelle is depicted in Figure~\ref{engineGeo} which shows the noise radiation streaming out of the inlet from the fan. 
A three-dimensional acoustic mesh of the fan intake (the domain $D \in \mathbb{R}^3$) is shown in Figure~\ref{mesh3D} and is used here for acoustic simulation. 
Its boundary surface is composed of five parts: the fan noise source boundary is denoted by $\Gamma_1$; the area of the attached acoustic liner material is denoted by $\Gamma_2$; $\Gamma_3$ denotes the near-field boundary whereas $\Gamma_4$ denotes the boundary far from the noise source, assuming that the Sommerfeld radiation boundary condition holds (as done in \cite{Cao2007Estim}); the acoustic wave propagation is assumed to be axisymmetric with the symmetry plane denoted by $\Gamma_5$.

\begin{figure}[h!]
\begin{center}
\includegraphics[scale=0.4]{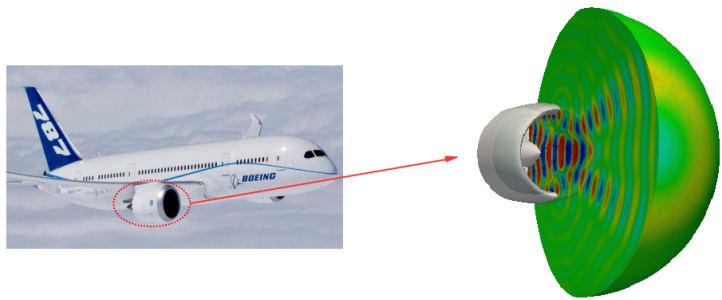}
\caption[Caption for LOF]{The turbofan engine inlet \protect\footnotemark with a typical simulation example showing noise radiation streaming out the inlet from the fan.}
\label{engineGeo}
\end{center}
\end{figure}
\footnotetext{Picture source of the aircraft: http://bestwallpaperhd.com/wp-content/uploads/2014/02/Boeing-Aircraft.jpg}

\begin{figure}[h!]
\begin{center}
\includegraphics[scale=0.4]{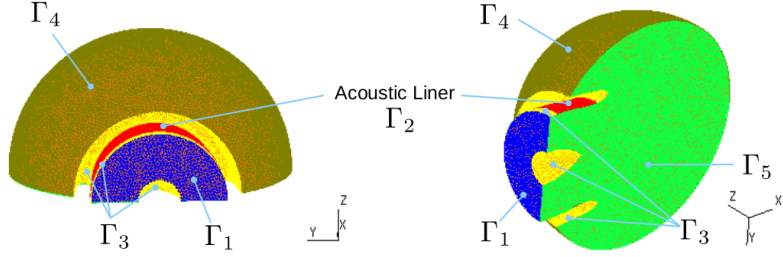}
\caption{Acoustic mesh of fan intake. The boundary surface constitutes five parts which are color-labelled as follows: fan noise source boundary $\Gamma_1$ in blue, acoustic liner boundary $\Gamma_2$ in red, near field boundary  $\Gamma_3$ in yellow, far field boundary $\Gamma_4$ in olive, and the symmetry plan boundary $\Gamma_5$ in green.}
\label{mesh3D}
\end{center}
\end{figure}

Aircraft noise propagation is governed by the Helmholtz equation $-\Delta p(\mathbf{x}) - k^2 p(\mathbf{x}) = 0$ with appropriate boundary conditions, where $p(\mathbf{x})$ denotes the complex-valued acoustic pressure at $\mathbf{x}=(x,y,z)\in D$. Here, the dependent variable $p(\mathbf{x})$ is appropriately non-dimensionalized \cite{SugataThesis2007}.
If $f$ denotes the frequency, $\omega = 2\pi f$ the angular frequency, and $c$ the sound speed, then $k = {\omega}/{c} > 0$ is the acoustic wavenumber. The wavenumber $k$ measures the amount of phase change of the sound waveform per meter. The speed of sound obviously changes with temperature and air humidity, and negligibly with atmospheric pressure and sound frequency \cite{Martin2011}. To take into consideration these uncertainties in weather conditions, we treat accordingly the wavenumber $k$ as a random variable. A Dirichlet boundary condition is imposed on $\Gamma_1$ to model the fan noise source: $p(\mathbf{x})|_{\Gamma_1} = \mu g_{\Gamma_1}(\mathbf{x})$, where $g_{\Gamma_1}(\mathbf{x})$ is prescribed whereas $\mu=\mu_{\rm r}+i\mu_{\rm i}$ is a random complex variable used to represent the variability of fan noise amplitude.

Formally, let $(\Omega, \mathcal{F} , P )$ denote a complete probability space, where $\Omega$, $\mathcal{F}$ and $P$ are the set of outcomes $\omega \in \Omega$, the $\sigma$-algebra collecting events, and the probability measure, respectively.  The measure $P : \mathcal{F} \to [0, 1]$ with $P (\Omega) = 1$ assigns probability to the events. 
We assume the random vector $\vartheta =[k,\mu_{\rm r},\mu_{\rm i}]: \Omega \to \Lambda=\Lambda_1\times\Lambda_2\times\Lambda_3$ is endowed with the joint probability density function $\rho: \Lambda\to [0, +\infty]$, and $\Lambda$ is bounded in $\mathbb{R}^+\times\mathbb{R}^2$. The resulting stochastic Helmholtz equation subject to boundary conditions is given by
\begin{equation}\label{PDEstate}
\left\{
\begin{array}{ll}
-\Delta p(\mathbf{x},\omega) - k(\omega)^2 p(\mathbf{x},\omega) = 0 &  \mbox{in } D \\[0.2cm]
p(\mathbf{x},\omega) = \mu(\omega)g_{\Gamma_1}(\mathbf{x}) &  \mbox{on } \Gamma_1 \\[0.2cm]
\frac{\partial p(\mathbf{x},\omega)}{\partial \mathbf{n}} + i\frac{k(\omega)}{\xi}p(\mathbf{x},\omega) = 0 & \mbox{on } \Gamma_2 \\[0.2cm]
\frac{\partial p(\mathbf{x},\omega)}{\partial \mathbf{n}} = 0 & \mbox{on }\Gamma_3\cup\Gamma_5  \\[0.2cm]
\frac{\partial p(\mathbf{x},\omega)}{\partial \mathbf{n}} + ik(\omega)p(\mathbf{x},\omega) = 0 & \mbox{on } \Gamma_4 .
\end{array}
\right.
\end{equation}
Here, $\mathbf{n}$ denotes the outward normal direction. The parameter $\xi=\xi_{\rm r}+i \xi_{\rm i} \in \mathbb{C}$ is the impedance factor of the acoustic liner whose real part $\xi_{\rm r}$, which should be positive for physical reasons, represents resistance and the imaginary part $\xi_{\rm i}$  reactance. For the random variable $p(\mathbf{x},\omega)$, we do not distinguish from the notation $p(\mathbf{x},\vartheta)$ in the sequel.
 

We define the function spaces
$V_0 = \{\phi\in H^1(D; \mathbb{C}): \phi|_{\Gamma 1} = 0 \}$, 
$V_{\vartheta} = \{p\in H^1(D; \mathbb{C}): p|_{\Gamma 1} = \mu g_{\Gamma_1} \}$,  
$Y_0 = L^2_{\rho}(\Lambda; V_0)$,
and $Y = \{p(\cdot,\vartheta): \Lambda \to V_{\vartheta},  \int_{\Lambda} \Vert p(\cdot,\vartheta)\Vert_{V_{\vartheta}}^2\rho(\vartheta)d\vartheta < \infty \}$. 
Then, a weak formulation of (\ref{PDEstate}) is given as follows: find $p \in Y$ such that 
\begin{equation}\label{weakHelmState}
\int_{\Lambda}a_{\vartheta}(p,\phi)\rho(\vartheta)d\vartheta = 0 \quad \forall \phi\in Y_0,
\end{equation}
where
\begin{equation*}
a_{\vartheta}(p,\phi) = \int_D\nabla p\cdot\nabla \overline{\phi} d\mathbf{x} - k^2\int_D p\overline{\phi} d\mathbf{x} + \frac{ik}{\xi}\int_{\Gamma_2}p\overline{\phi} ds + ik\int_{\Gamma_4}p\overline{\phi} ds
\end{equation*}
with $\overline{(\cdot)}$ denoting the complex conjugate.

The optimization problem we consider is to determine the impedance factor $\xi$ of the acoustic liner that
minimizes the amount of noise propagated from the engine inlet. For the uncertainties involved in the Helmholtz equation governing the noise radiation, we need an appropriate measure $\sigma$ to characterize the probability distribution of the aircraft noise. Mathematically, the optimization problem we consider is given by
\begin{myequation}
\min\limits_\xi \Big\{ \frac{1}{2}\sigma\Big[\frac{1}{\gamma_{\rm p}}\int_D|p(\mathbf{x},\cdot; \xi)|^2d\mathbf{x} \Big] +\frac{\gamma}{2}|\xi|^2\Big\},
\end{myequation} 
where $p(\mathbf{x},\vartheta)$ satisfies the weak equation (\ref{weakHelmState}). The minimization is over the physical domain of $\xi$ which is omitted for simplicity. The constant $\gamma_{\rm p}$ is chosen to scale the energy of the acoustic potential and $\gamma$ is the regularization coefficient. In this paper, we consider the operator $\sigma$ to be a risk measure, a concept that is popular in science or finance to control large deviations or tail probabilities. In particular, we take the conditional value-at-risk (CVaR) measure  that was first developed in the finance community \cite{Rockafellar00optimizationof} and later applied to PDE-constrained optimization \cite{Kouri2016cvar}. The CVaR measure is described in Section~\ref{CVaR-intr}.

Solutions of the Helmholtz equation, in both the deterministic or stochastic cases, are generally expensive to  obtain. The Helmholtz equation is non-coercive so that iterative finite element solutions converges much slower than for coercive problems.
Moreover, a stochastic solution could require a large number of realizations of deterministic solutions.
Motivated by this, in this paper, we focus on the reduced-order modeling of the Helmholtz equation in a parallel framework, and apply it for the stochastic optimization problem mentioned above. In particular, we employ the classical proper orthogonal decomposition reduced-modeling approach that is discussed in Section~\ref{pod}.

\subsection{The conditional value-at-risk measure}\label{CVaR-intr}

Let $X(\vartheta)$ denote a general cost function with uncertainties denoted by the random vector $\vartheta: \Omega \to \Lambda$.
The underlying probability distribution of $\vartheta$ is assumed to have density $\rho(\vartheta)$. The distribution function of $X$ is 
$$\Psi(\alpha) = \int_{\{\vartheta: X(\vartheta) \leq \alpha \}} \rho(\vartheta) d\vartheta.$$ 
At a specified confidence level $\beta \in (0, 1)$, 
the corresponding value-at-risk ($\mbox{VaR}_{\beta}$) is defined as the $\beta$-quantile of $X$, that is,
$$\text{VaR}_{\beta}[X] = \min\{\alpha\in \mathbb{R}: \Psi(\alpha) \geq \beta \}.$$ 
It is the lowest $\alpha$ such that, with probability $\beta$, the value of $X$ will not exceed $\alpha$. 
Using this concept, at probability level $\beta$, the {\em conditional value-at-risk} $\text{CVaR}_{\beta}$ is defined as the conditional expectation
$$
\text{CVaR}_{\beta}[X] = \mathbb{E}\big[X \big | X\geq\text{VaR}_{\beta}[X] \big] = \frac{1}{1-\beta}\int_{\{\vartheta: X(\vartheta) \geq \text{VaR}_{\beta}[X]\}} X(\vartheta) \rho(\vartheta) d\vartheta.
$$
$\mbox{CVaR}_{\beta}$ measures the conditional mean value of the cost above the amount $\text{VaR}_{\beta}[X]$. The second equality results from the probability $P\big[ X \geq \text{VaR}_{\beta}[X] \big] =  1- \beta$. 

%
%
%

Based on the $\mbox{CVaR}_{\beta}$ concept, the stochastic optimization problem we propose is to determine
\begin{myequation}
\min\limits_{\xi}\Big\{\frac{1}{2}\text{CVaR}_\beta\Big[\frac{1}{\gamma_{\rm p}}\int_D|p(\mathbf{x},\cdot; \xi)|^2d\mathbf{x}\Big] +\frac{\gamma}{2}|\xi|^2\Big\}. \label{Jbeta}
\end{myequation}
In \cite{Rockafellar00optimizationof}, it is proved that $\text{CVaR}_{\beta}[X]$ can be characterized in terms of 
$$\text{CVaR}_{\beta}[X] = \min\limits_{\alpha\in\mathbb{R}}F_{\beta}(\alpha; X),$$
where 
$$
F_{\beta}(\alpha; X) = \alpha + \frac{1}{1-\beta}\int_\Lambda\big[X(\vartheta)-\alpha \big]^+\rho(\vartheta)d\vartheta
$$
with $[x]^+=\max\{x,0\}$. It is also shown in \cite[Theorem 2]{Rockafellar00optimizationof} that problem (\ref{Jbeta}) is equivalent to
\begin{myequation}\label{Jbetaalpha}
\min\limits_{\xi,\alpha}\Big\{\frac{1}{2}\bigg[ \alpha + \frac{1}{1-\beta}\int_\Lambda\Big[\frac{1}{\gamma_{\rm p}}\int_D|p(\mathbf{x},\vartheta; \xi)|^2d\mathbf{x} - \alpha \Big]^+\rho(\vartheta)d\vartheta \bigg] +\frac{\gamma}{2}|\xi|^2\Big\}. 
\end{myequation}

To solve (\ref{Jbetaalpha}), non-smooth optimization can be avoided by smoothing the plus function $[ x ]^+$ appearing in the CVaR measure.  For this purpose, we use the smoothed plus function \cite{Kouri2016cvar} 
\begin{equation*}
h_\varepsilon(x) = \left\{
\begin{array}{ll}
0	&	\text{if	} x\leq -\frac{\varepsilon}{2} \\[0.1cm]
\frac{(x+\varepsilon/2)^3}{\varepsilon^2	} - \frac{(x+\varepsilon/2)^4}{2\varepsilon^3} & \text{if	  }  -\frac{\varepsilon}{2} < x < \frac{\varepsilon}{2} \\[0.1cm]
x	& \text{if	 } x\geq \frac{\varepsilon}{2}.
\end{array}
\right.
\end{equation*}
in $C^2(\mathbb{R})$.
The resulting smoothed problem is
\begin{myequation}\label{Jbetaalpha-eps}
\min\limits_{\xi,\alpha}
\Big\{\frac{1}{2}\bigg[ \alpha + \frac{1}{1-\beta}\int_\Lambda h_\varepsilon\Big(\frac{1}{\gamma_{\rm p}}\int_D|p(\mathbf{x},\vartheta; \xi)|^2d\mathbf{x}-\alpha \Big)\rho(\vartheta)d\vartheta \bigg] +\frac{\gamma}{2}|\xi|^2\Big\}. 
\end{myequation}
Under certain assumptions, the rate of convergence of the minimizing problem with respect to the smoothing parameter can be quantified as $\mathcal{O}(\varepsilon^\frac{1}{2})$ \cite[Theorem 4.13]{Kouri2016cvar}.

\subsection{Proper orthogonal decomposition reduced-order modeling}\label{pod}

Solving the optimization problem (\ref{Jbetaalpha-eps}) requires a vast number of Helmholtz solutions $p(\mathbf{x},\vartheta;\xi)$ computed for different values of $\vartheta$ and $\xi$. In this section, we describe the use of the proper orthogonal decomposition (POD) approach to dramatically reduce the computational cost. A deterministic solution of (\ref{PDEstate}) can be determined by solving the problem:
find $\widetilde{p} = p -\mu p_g \in V_0=H^1_{\Gamma_1}(D;\mathbb{C})$, such that
\begin{equation}\label{liftDetermi}
a_\vartheta(\widetilde{p},\phi_D) = b_\vartheta(\phi_D) \quad \forall \phi_D \in V_0,
\end{equation}
where $p_g$ is an auxiliary function introduced to render the Dirichlet condition homogeneous and $b_\vartheta(\cdot)$ is a bounded linear functional; see Section~\ref{wellposeAna}.
To simplify the discussion, in this section we focus on the alternative solution $\widetilde{p}(\mathbf{x},\vartheta;\xi)$. 

Given a pre-specified parameter value $\vartheta$ and control value $\xi$, a spatial finite element solution to (\ref{liftDetermi}) is represented as $\widetilde{p}_h(\mathbf{x},\vartheta;\xi) = \sum\limits_{j=1}^n \widetilde{p}_j(\vartheta,\xi)\phi_j(\mathbf{x})$. 
Obtaining a high-fidelity solution $\widetilde{p}_h$ usually requires a large number of degrees of freedom $n$ because the finite element basis $\{ \phi_j(\mathbf{x})\}$ is of general purpose and does not contain any information on the problem at hand. The derived discrete system is of large dimension of order $n$, has complex entries, and is indefinite and thus is computationally demanding. 
The goal is to drastically reduce the dimension of the algebraic system (so that it can be solved cheaply) whereas not losing too much accuracy.

\textbf{Reduced-order modeling.}
The idea is to construct a small set of basis functions $\{{\varphi}_i \}_{i=1}^N$ in the finite element space $V_h$
such that the solution $\widetilde{p}(\mathbf{x},\vartheta;\xi)$ can be well approximated in the space $V_R = \spn\{{\varphi}_i\}$, referred to as the {\it reduced space}. The functions $\{{\varphi}_i \}_{i=1}^N$ form the {\it reduced basis} (RB). To construct a {\it reduced-order model} (ROM) for the Helmholtz equation, we impose Galerkin  projection onto the reduced space: find $\widetilde{p}_R\in V_R$ satisfying
\begin{equation}\label{ROMproj}
a_{\vartheta}(\widetilde{p}_R,\varphi_R) = b_\vartheta(\varphi_R)\quad \forall \varphi_R\in V_R.
\end{equation}
Let $\widetilde{p}_R = [{\varphi}_1, \cdots, {\varphi}_N] \widetilde{\mathbf{p}}_{R}(\vartheta, \xi)$ with $\widetilde{\mathbf{p}}_{R}(\vartheta, \xi)$ denoting the vector of coordinates in the reduced space. Substituting this representation into (\ref{ROMproj}) we obtain the reduced system
\begin{equation}\label{generalReduced}
\Big[a_\vartheta(\varphi_j, \varphi_i)\Big]_{N\times N} \widetilde{\mathbf{p}}_{R} 
= \Big[b_\vartheta(\varphi_i)\Big]_{N\times 1}.
\end{equation}
This linear system features a very small size $N$ so that it can be efficiently tackled using a direct solver. 

The RB $\{{\varphi}_i \}_{i=1}^N$ can be constructed from the full finite element approximation of (\ref{liftDetermi}), which we refer to as the {\it full-order model} (FOM). The computation is of large scale and therefore expensive, so it is performed offline. 
In the online phase, the ROM (\ref{generalReduced}) is solved many times for different values of $\vartheta$ and $\xi$, incurring remarkably lower computational costs compared to that for FOM.

\textbf{POD basis construction.} 
RB construction starts by sampling the varying parameter vector $\vec\nu=[\vartheta,\xi]$. Let the sample set $\Xi_{\rm smp} = \{\vec\nu_1, \ldots, \vec\nu_m \}$
consist of $m$ distinct sample of $\vec\nu$. 
A RB is constructed so to guarantee that for each $\vec\nu_i \in \Xi_{\rm smp}$, the error of approximating $\widetilde{p}(\mathbf{x},\vec\nu_i)$ in the reduced space is bounded by a desired tolerance. We follow the proper orthogonal decomposition (POD) approach, which constructs, in a certain sense, an ``optimal" reduced basis as specified below.

For the sake of completeness, we briefly some recall basic features of POD; full details can be found in \cite{Kunisch2001}.
Given the parameter sample set $\Xi_{\rm smp}$, we solve the FOM for each parameter value in $\Xi_{\rm smp}$. These solutions are referred to as {\it snapshots} and are denoted by $\{\widetilde{p}^i_{S,h} \}_{i=1}^m$. We treat $\widetilde{p}^i_{S,h}$ as a two-dimensional vector of real functions. The POD approach seeks an orthonormal {\it POD basis} $\{{\varphi}_1, \cdots, {\varphi}_N \}$ (also known as the set of {\it POD modes}) in $V_h$ of a given rank $N~(N\ll m)$ that can best approximate the training space $V^{trn} = \spn\{\widetilde{p}^i_{S,h} \}_{i=1}^m$. Here, ``best'' means that the POD basis solves 
\begin{equation}\label{minPOD}
\min_{\{{\psi}_i\}} \sum_{j=1}^m \big\Vert\widetilde{p}^j_{S,h}-\sum_{i=1}^N\langle \widetilde{p}^j_{S,h},{\psi}_i \rangle_{L^2}{\psi}_i \big\Vert^2_{L^2}
 \qquad\text{ s.t. } \langle{\psi}_i,{\psi}_j\rangle_{L^2} = \delta_{ij}.
\end{equation}
Denote by $\widetilde{\mathbf{p}}^j_S \in \mathbb{R}^{2n}=\mathbb{C}^n$ the vector of finite element coefficients of $\widetilde{p}^j_{S,h}$. We gather the snapshot vectors into as columns of the {\it snapshot matrix} $\mathbf{P}=[\widetilde{\mathbf{p}}^1_S, \cdots, \widetilde{\mathbf{p}}^m_S] \in \mathbb{R}^{2n\times m}$. We also introduce the correlation matrix $\mathbf{C}_{\rm P}=\big[<\widetilde{p}^j_{S,h}, \widetilde{p}^i_{S,h}>_{L^2} \big] = \mathbf{P}^T\mathbf{M}P \in \mathbb{R}^{m\times m}$ corresponding to the snapshots, where $\mathbf{M}$ denotes the mass matrix. The POD basis is then constructed by determining the eigenvectors of $\mathbf{C}_{\rm P}$:
\begin{itemize}
\item[] let $(\lambda_j,\mathbf{u}_j)$ denote the eigen-pairs of $\mathbf{C}_{\rm P}$ with $\lambda_1 \geq \cdots \geq \lambda_N\geq\cdots\geq\lambda_d>0$ ($d=\mbox{rank }\mathbf{C}_{\rm P}$) and $\mathbf{u}_i^T\mathbf{u}_j = \delta_{ij}$; then, the POD mode $\varphi_i$ has finite element representation $\frac{1}{\sqrt{\lambda_i}}\mathbf{P}\mathbf{u}_i$ \cite{Kunisch2001}.
\end{itemize}
From \cite{Kunisch2001}, we also have the $L^2$ error estimate
\begin{equation}\label{L2PODerr}
\sum_{j=1}^m \big\Vert\widetilde{p}^j_{S,h}-\sum_{i=1}^N\langle \widetilde{p}^j_{S,h},{\varphi}_i \rangle_{L^2}{\varphi}_i \big\Vert^2_{L^2} =  \sum_{i=N+1}^{d}\lambda_i.
\end{equation}
The $H^1$ norm error estimate can be derived from \cite[Lemma 3.2]{Iliescu2014}:
\begin{equation}\label{H1PODprojErr}
\sum_{j=1}^m \big\Vert\widetilde{p}^j_{S,h}-\sum_{i=1}^N\langle \widetilde{p}^j_{S,h},{\varphi}_i \rangle_{L^2}{\varphi}_i \big\Vert^2_{H^1} =  \sum_{i=N+1}^{d}\lambda_i\norm{\varphi_i}_{H^1}^2.
\end{equation}

In practice, one may construct the POD basis on the finite element vectors in  Euclidean space, which could be more efficient. In this case, the POD modes are given by the $N$ left singular vectors of $\mathbf{P}$ associated with the $N$ largest singular values. An efficient way for computing them 
is to first compute the {\it thin QR factorization} of $\mathbf{P}$ as $\mathbf{P} = \mathbf{QR}$ , and then compute the singular value decomposition of the small matrix $\mathbf{R}\in\mathbb{R}^{m\times m}$ as $\mathbf{R}=\mathbf{U}_{\rm R}\mathbf{S}_{\rm R}\mathbf{V}_{\rm R}^T$. The POD modes can be extracted in order from the columns of $\mathbf{QU}_{\rm R}$.


\section{A parallel POD-BFGS optimization method}\label{numScheme}

Monte Carlo methods \cite{Fishman1996} are among the most popular choices for approximating statistical moments such as expectations of solutions of SPDEs.  Applied to the stochastic Helmholtz equation, a classical Monte Carlo method (MCM) proceeds by 
\begin{itemize}
\item[--] drawing  $Q$ independent and identically distributed (i.i.d.) random samples of $\vartheta$: $\{\vartheta_1,\ldots,\vartheta_Q \}$;
\item[--] computing a spatial finite element solution $p_h(\mathbf{x}, \vartheta_j )$ for each sample point $\vartheta_j$;
\item[--] obtaining the statistical quantity of interest by averaging over the $Q$ realizations, e.g., \[\int_\Lambda h_\varepsilon\Big(\frac{1}{\gamma_{\rm p}}\int_D|p_h(\mathbf{x},\vartheta)|^2d\mathbf{x} - \alpha \Big)\rho(\vartheta)d\vartheta 
\approx \frac{1}{Q}\sum\limits_{j=1}^Q h_\varepsilon\Big(\frac{1}{\gamma_{\rm p}}\int_D|p_h(\mathbf{x},\vartheta_j)|^2d\mathbf{x} - \alpha \Big).\]
\end{itemize} 
As a non-intrusive approach, MCM requires only deterministic Helmholtz solutions for each realization. Although the numerical error of MCM is proportional to $1/\sqrt{Q}$, thus requiring a large number of realizations, the convergence behavior holds true for any dimension of the random vector in the SPDE. In this sense, it avoids the curse of dimensionality.
Moreover, because here we solve the Helmholtz equation in the reduced-order space having small dimension, obtaining a large number of Helmholtz solutions is not nearly as challenging an endeavor.

In the rest of this section, we describe the parallel processing schemes we use for the state and optimization problems.

\subsection{The reduced Helmholtz solver in martix form}\label{sec:HelmROM}
For specified values of $\vartheta$ (i.e.~$[k,\mu_{\rm r},\mu_{\rm i}]$) and $\xi$, we solve the deterministic full-order Helmholtz equation by the finite element (FE) method. 
Letting $\{\phi_j\}_{j=1}^n$ be a finite element basis on the acoustic domain $D$ associated with the nodes $\{\mathbf{x}_j\}_{j=1}^n$, we define the mass matrix $\mathbf{M}^0$, the stiffness matrix $\mathbf{S}^0$, and the boundary mass matrices $\mathbf{K}^0_2$ and $\mathbf{K}^0_4$ with entries as follows:
\begin{align*}
& [\mathbf{M}^0]_{jk} = \int_D \phi_k\phi_j d\mathbf{x} \qquad [\mathbf{S}^0]_{jk} = \int_D \nabla\phi_k\cdot\nabla\phi_j d\mathbf{x} \\
& [\mathbf{K}^0_2]_{jk} = \int_{\Gamma_2} \phi_k\phi_j ds \qquad [\mathbf{K}^0_4]_{jk} = \int_{\Gamma_4} \phi_k\phi_j ds. &
\end{align*}
We use the bold symbol $\mathbf{p}\in \mathbb{C}^n$ to denote the vector representation of of the coefficients of $p(\mathbf{x})$ in the finite element space, and analogously for other functions. If we ignore the Dirichlet boundary condition for the time being, the FE discretization of the Helmholtz model is formulated, in matrix form, as
\begin{equation*}
\Big(\mathbf{S}^0 - k^2\mathbf{M}^0 + \frac{ik(\xi_{\rm r}-i\xi_{\rm i})}{|\xi|^2}\mathbf{K}^0_2 + ik\mathbf{K}^0_4\Big)\mathbf{p} = \boldsymbol{0}.
\end{equation*}
In the simulation, we split the real and imaginary parts of complex-valued functions $p = p_{\rm r} + i p_{\rm i}$ and $\xi = \xi_{\rm r} + i \xi_{\rm i}$, but still use $\mathbf{p}\in \mathbb{R}^{2n}$ to denote the splitting form $[\mathbf{p}_{\rm r}, \mathbf{p}_{\rm i}]^T$ by an abuse of notation. Then, the algebraic system in terms of real-valued variables is given by
$\widetilde{\mathbf{A}}\mathbf{p} = \boldsymbol{0}$,
where
\begin{equation*}
\widetilde{\mathbf{A}} = 
\begin{bmatrix}
\mathbf{S}^0 - k^2\mathbf{M}^0 + \frac{k\xi_{\rm i}}{|\xi|^2}\mathbf{K}^0_2 & -\frac{k\xi_{\rm r}}{|\xi|^2}\mathbf{K}^0_2 - k\mathbf{K}^0_4 \\[0.3cm]
\frac{k\xi_{\rm r}}{|\xi|^2}\mathbf{K}^0_2 + k\mathbf{K}^0_4 & \mathbf{S}^0 - k^2\mathbf{M}^0 + \frac{k\xi_{\rm i}}{|\xi|^2}\mathbf{K}^0_2
\end{bmatrix}.
\end{equation*}
Imposing the Dirichlet boundary condition on $\Gamma_1$ leads to the system
\begin{equation}\label{disHelm}
\mathbf{A}\mathbf{p}=\mathbf{b}
\end{equation}
with $\mathbf{A} = (\mathbf{I}-\mathbf{I}_{\Gamma_1})\widetilde{\mathbf{A}} + \mathbf{I}_{\Gamma_1}$ and $\mathbf{b} = \mu_{\rm r}\mathbf{g}_{\rm r} + \mu_{\rm i}\mathbf{g}_{\rm i}$. Here, the matrix $\mathbf{I}_{\Gamma_1}$ marks the indices on the boundary $\Gamma_1$: 
the $i$-th row of $\mathbf{I}_{\Gamma_1}$ is the unit vector $\mathbf{e}_i^T$ in $\mathbb{R}^{2n}$ if $\mathbf{x}_i \text{ or }\mathbf{x}_{i-n}\in \Gamma_1$ and zero else. We assume that $g_{\Gamma 1}(\mathbf{x})$ is a real function and denote its finite element interpolation in $\mathbb{R}^n$ as $\mathbf{g}_{\Gamma_1}$. 
The vector $\mathbf{g}_{\rm r}$ in $\mathbf{b}$ is a two-component vector with the first component being $\mathbf{g}_{\Gamma_1}$ and the second being zero. The vector $\mathbf{g}_{\rm i}$ is obtained instead by putting zero to the first component and $\mathbf{g}_{\Gamma_1}$ to the second.

We implemented the Helmholtz solver in LifeV, which is an object oriented parallel finite element library in C++ developed by several groups worldwide ({\tt{www.lifev.org}}). For convenience, we treat the complex-valued function $p(\mathbf{x})$ as a two dimensional vector function and build its corresponding finite element space. In such case, the mass, stiffness, and boundary mass matrices are in the form
$\mathbf{W} = 
\begin{bmatrix}
\mathbf{W}^0  & \\
 & \mathbf{W}^0
\end{bmatrix}
$
where $\mathbf{W}$ stands for $\mathbf{M}, ~\mathbf{S}, ~\mathbf{K}_2$, and $\mathbf{K}_4$.
We further define the ``skew'' boundary mass matrix 
$\widetilde{\mathbf{K}}_2 = 
\begin{bmatrix}
 & -\mathbf{K}_2^0  \\
 \mathbf{K}_2^0 & 
\end{bmatrix}
$ and $\widetilde{\mathbf{K}}_4$ analogously.
The block matrix $\widetilde{\mathbf{A}}$ can then be expressed as
\begin{equation*}
\widetilde{\mathbf{A}} = \mathbf{S}-k^2\mathbf{M}+\frac{k\xi_{\rm i}}{|\xi|^2}\mathbf{K}_2 + \frac{k\xi_{\rm r}}{|\xi|^2}\widetilde{\mathbf{K}}_2 + k\widetilde{\mathbf{K}}_4.
\end{equation*}

The linear algebraic system (\ref{disHelm}) for the Helmholtz equation is indefinite so that many classical iterative methods encounter convergence issues. However, some iterative method  \cite{Erlangga2008} for this setting have been developed. In particular, the shifted Laplacian preconditioner for Krylov subspace methods has attracted much attention for its relative robustness and efficiency \cite{Laird_preconditionediterative, Erlangga2004409, Airaksinen20101796, Gander2015}. Inspired by these developments, we construct a preconditioner from a discrete version of the shifted Laplacian operator $-\Delta  - (\beta_1-\beta_2 i)k^2 $ (subject to the same boundary conditions as in (\ref{PDEstate})) for some $\beta_1, \beta_2 \in \mathbb{R}$. According to our numerical experience, the value $(\beta_1, \beta_2) = (1, 0.5)$ performs well, an incomplete LU approximation of the discrete shifted Laplacian operator will be taken as the preconditioner in our simulations. The preconditioned Helmholtz system is finally solved by the GMRES iterative method implemented in the Trilinos package ({\tt{www.trilinos.org}}). 
To maintain the scalability of the ILU preconditioner in a parallel computing enviroment, we take a local ILU factorization with overlap level 4 (see the IFPACK package in Trilinos).

We next build the reduced-order Helmholtz solver. 
Letting $\mathbb{Z} \in \mathbb{R}^{2n\times N}$ be the reduced basis for the acoustic pressure $p$, we represent $\mathbf{p}$ in the full-order space as $\mathbb{Z}\mathbf{p}_{\rm rb}$.
By projecting the discrete Helmholtz system (\ref{disHelm}) onto the reduced space spanned by $\mathbb{Z}$, we obtain the reduced Helmholtz model
\begin{equation}\label{ROMhelm}
\underbrace{\mathbb{Z}^T\mathbf{A}\mathbb{Z}}_{\mathbf{A_r}}\mathbf{p}_{\rm rb} = \underbrace{\mathbb{Z}^T\mathbf{b}}_{\mathbf{b}_{\rm r}}
\end{equation}
for the reduced-basis solution $\mathbf{p}_{\rm rb}$.
Note that $\mathbf{A_r} \in \mathbb{R}^{N\times N}$ is a dense matrix but in general it has a very small size, hence the linear system (\ref{ROMhelm}) can be tackled with a direct solver. 

In practice, if we store in the offline computation the small dense matrices $\mathbf{M}_{\rm r}=\mathbb{Z}^T(\mathbf{I}-\mathbf{I}_{\Gamma_1})\mathbf{M}\mathbb{Z}$ (and $\mathbf{S}_{\rm r}, \mathbf{K}_{\rm 2r}, \widetilde{\mathbf{K}}_{\rm 2r}, \widetilde{\mathbf{K}}_{\rm 4r}$ analogously) and $\mathbf{I}_{\rm r}=\mathbb{Z}^T\mathbf{I}_{\Gamma_1}\mathbb{Z}$, we can efficiently (in work depending on $N$ and not on the dimension of the finite element space) assemble the reduced coefficient matrix online as
$$
\mathbf{A}_{\rm r} = \mathbf{S}_{\rm r}-k^2\mathbf{M}_{\rm r}+\frac{k\xi_{\rm i}}{|\xi|^2}\mathbf{K}_{\rm 2r} + \frac{k\xi_{\rm r}}{|\xi|^2}\widetilde{\mathbf{K}}_{\rm 2r} + k\widetilde{\mathbf{K}}_{\rm 4r} + \mathbf{I}_{\rm r}.
$$
Similarly, the reduced right hand side in (\ref{ROMhelm}) can be assembled online without dependence on the full-order size.
%

\subsection{The algorithm for parallel POD-BFGS optimization}\label{sec:podOpt}

In computing the CVaR measure, one can approximate the integral $\int_\Lambda \cdot ~\rho(\vartheta)d\vartheta$ numerically by any appropriate quadrature rules, say with quadrature weights $\omega_1, \cdots, \omega_Q \in \mathbb{R}$ and quadrature points $\vartheta_1,\cdots, \vartheta_Q \in \Lambda$. Due to the efficiency of the reduced-order model, the Monte Carlo method ($\omega_j = \frac{1}{Q}$) is a natural choice for handling this integral despite the large number $Q$ of realizations (deterministic problems to be solved) required for high accuracy.
In the cost function, $\int_D |p(\mathbf{x})|^2d\mathbf{x} = \mathbf{p}^T\mathbf{M}\mathbf{p}$ characterizes the amount of noise.
The full-order discrete optimization problem is given by
\begin{equation}\label{J-discr}
\min\limits_{\xi,\alpha}J^Q(\xi,\alpha) =\frac{1}{2}\bigg[ \alpha + \frac{1}{1-\beta}\sum\limits_{j=1}^Q\omega_jh_\varepsilon\big(\frac{1}{\gamma_{\rm p}} \int_D |p(\mathbf{x},\vartheta_j;\xi)|^2d\mathbf{x}-\alpha \big) \bigg] +\frac{\gamma}{2}|\xi|^2 
\end{equation}
In reduced order, the noise energy $\int_D |p(\mathbf{x})|^2d\mathbf{x}$ equals $\Vert \mathbf{p}_{\rm rb}\Vert_{2}$ due to the orthogonality of the reduced basis. The corresponding reduced-order optimization problem is given by
\begin{myequation}\label{Jr-discr}
\min\limits_{\xi,\alpha}J_{\rm r}^Q(\xi,\alpha) =\frac{1}{2}\bigg[ \alpha + \frac{1}{1-\beta}\sum\limits_{j=1}^Q\omega_jh_\varepsilon\big(\Vert\mathbf{p}_{\rm rb}(\vartheta_j;\xi)\Vert_{2}^2/\gamma_{\rm p}-\alpha \big)\bigg] +\frac{\gamma}{2}|\xi|^2, 
\end{myequation}
where $\mathbf{p}_{\rm rb}$ solves (\ref{ROMhelm}). This is the problem we will solve in Section~\ref{sec:resultCVaR}.

The cost function $J_{\rm r}^Q$ can be evaluated efficiently using parallel computing. Because the ROM is of small size, 
the system assembly from the pre-stored matrices $\mathbf{M}_{\rm r}$, $\mathbf{S}_{\rm r}$, etc.~can be executed independently on each processor. Communication among processors only occurs when the noise energies need to be summed to form $J_{\rm r}^Q$. A graphic description of the parallel computation of $J_{\rm r}^Q$ is illustrated in Figure~\ref{mpi-pic}.

\begin{figure}[h!]
\begin{center}
\includegraphics[scale=0.4]{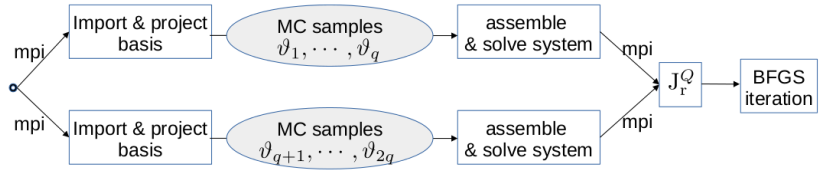}
\end{center}
\caption{Parallel computation of $J_{\rm r}^Q$}
\label{mpi-pic}
\end{figure}

For the application of a gradient-based optimization solver, we introduce the Lagrangian multipliers $\{\mathbf{q}_j\}_{j=1}^Q \subseteq \mathbb{R}^N$ 
that solve
the adjoint systems
\begin{equation*}
\mathbf{A}_{\rm r}(\vartheta_j)^T\mathbf{q}_j = \frac{h_\varepsilon^\prime\big(\Vert\mathbf{p}_{\rm rb}(\vartheta_j;\xi)\Vert_{2}^2/\gamma_{\rm p}-\alpha \big)}{(1-\beta)\gamma_{\rm p}}\mathbf{p}_{\rm rb}(\vartheta_j),\quad j=1,\cdots,Q.
\end{equation*}
The  G\^ateaux derivative of $J_{\rm r}^Q(\xi,\alpha)$ can then be formulated as
\begin{align}
&\frac{D J_{\rm r}^Q(\xi)}{D\xi} = \gamma\xi-\sum\limits_{j=1}^Q\omega_j\mathbf{q}_j^T\frac{\partial \mathbf{A}_{\rm r}}{\partial \xi}
(\vartheta_j)\mathbf{p}_{\rm rb}(\vartheta_j) \\
&\frac{D J_{\rm r}^Q(\xi)}{D\alpha} = \frac{1}{2} - \frac{1}{2(1-\beta)}\sum\limits_{j=1}^Q\omega_jh^\prime_\varepsilon\big(\Vert\mathbf{p}_{\rm rb}(\vartheta_j; \xi)\Vert_{2}^2/\gamma_{\rm p}-\alpha\big).
\end{align}

We solve the optimization problem (\ref{Jr-discr}) by the BFGS quasi-Newton method as shown in Algorithm \ref{opt-alg}. The line search in step \ref{linesch} of Algorithm \ref{opt-alg} is based on  cubic interpolation of the misfit function and on the Armijo condition (\cite{nocedal2006}). In practice, the line search performs well enough with a at most two iterations. In the algorithm, we assume that the total number $Q$ of Monte Carlo samples is a multiple of the number of processors. 

\begin{algorithm}[tp]
\caption{Parallel POD-BFGS Optimization}\label{opt-alg}
\begin{algorithmic}[1]
\Input initial guess $\mathbf{\xi}^0$ and $\alpha^0$, probability level $\beta$, POD basis $\mathbb{Z}$, number of processors $n_{\rm p}$
\Output estimated impedance value and its corresponding $\mbox{VaR}_\beta$
\State Import and project the basis $\mathbb{Z}$ (communication among processors)
\State On processor $i\in\{0,\cdots,n_p-1\}$, draw i.i.d.~random samples $\Theta^i = \{\vartheta_{1+i\frac{Q}{n_{\rm p}}},\cdots,\vartheta_{(i+1)\frac{Q}{n_{\rm p}}} \}$ of $\vartheta$
\State Initialize inverse Hessian $\boldsymbol{H}_0 \leftarrow \frac{1}{||\nabla J_{\rm r}^Q(\xi^0,\alpha^0)||}\mathbf{I} $
\State $k\leftarrow 0$
\While{stopping criterion not satisfied}
		\State On processor $i\in\{0,\cdots,n_p-1\}$, solve $\mathbf{p}(\vartheta;\xi^k)$ for $\vartheta\in\Theta^i$
		\State Evaluate $J_{\rm r}^Q(\xi^k,\alpha^k)$ (communication among processors)
		\State On processor $i\in\{0,\cdots,n_p-1\}$, solve $\mathbf{q}_j$ for $j\in \{1+i\frac{Q}{n_{\rm p}}, \cdots, (i+1)\frac{Q}{n_{\rm p}} \}$
		\State Evaluate $\nabla J_{\rm r}^Q(\xi^k,\alpha^k)$ (communication among processors)		
		\State Compute search direction $\mathbf{v}^k = -\boldsymbol{H}_k\nabla J_{\rm r}^Q(\xi^k,\alpha^k) $ 
				
\vspace{0.1cm}		
		  
		\State Set $[{\xi}^{k+1},\alpha^{k+1}]^T = [{\xi}^{k},\alpha^{k}]^T + \gamma_k \mathbf{v}^k$ with $\gamma_k \in (0, \infty)$ computed from a line search \label{linesch}
						
\vspace{0.1cm}		
		  
		\State Define $\mathbf{s}_k = [{\xi}^{k+1},\alpha^{k+1}]^T-[{\xi}^{k},\alpha^{k}]^T$, $\mathbf{y}_k=\nabla J_{\rm r}^Q({\xi}^{k+1},\alpha^{k+1}) - \nabla J_{\rm r}^Q({\xi}^{k},\alpha^{k})$, $\rho_k = \frac{1}{\mathbf{y}_k^T\mathbf{s}_k}$
						
\vspace{0.1cm}		
		  
		\State Update the inverse Hessian
		
\vspace{-0.3cm}		
		  
$$\boldsymbol{H}_{k+1} = (\mathbf{I} - \rho_k \mathbf{s}_k \mathbf{y}_k ^T)\mathbf{H}_k(\mathbf{I} - \rho_k \mathbf{y}_k \mathbf{s}_k^T ) + \rho_k \mathbf{s}_k \mathbf{s}_k^T \qquad\mbox{(BFGS \cite{nocedal2006})}
 $$
 		
\vspace{-0.2cm}		
		  
		\State $k\leftarrow k+1$
\EndWhile
\State \textbf{return} ${\xi}^{k}$ and $\alpha^k$
\end{algorithmic}
\end{algorithm}

\section{Numerical experiments for impedance optimization for turbofan noise reduction}\label{sec:resutls}

In this section, we apply the POD-BFGS optimization strategy for impedance optimization for turbofan noise reduction. We first study the efficiency and accuracy of the POD-based Helmholtz solver after which we provide computed optimal impedance values obtained by minimizing, in different scenarios, the CVaR measure. 

In this study, simulations of fan noise propagation are performed on a realistic intake geometry.
Figure~\ref{2dGeo} (left) shows a two-dimensional section of a High Bypass Ratio (HBR) turbofan engine and its nacelle intake geometry (middle) used to create the axi-symmetric acoustic computational domain. To the right, it shows the three-dimensional geometry (zoomed in) with a typical example of noise radiation from the fan plane. 
The domain and its mesh (shown in Figure~\ref{mesh3D}) are generated using the Gmsh software; it matches the key geometric data created by the intake model used for various liner studies \cite[Figure~5.4]{thesisMustafi} in the European Commission research project SILENCE(R)\footnote{http://www.xnoise.eu/index.php?id=85}.
Specifically, the fan radius is 1.2m. The acoustic liner, highlighted in red in Figure~\ref{2dGeo}, has a length of 1.08m starting at a distance of 0.21m from the fan plane. The far field boundary $\Gamma_4$ is 5m away from the fan noise source. 

\begin{figure}[h!]
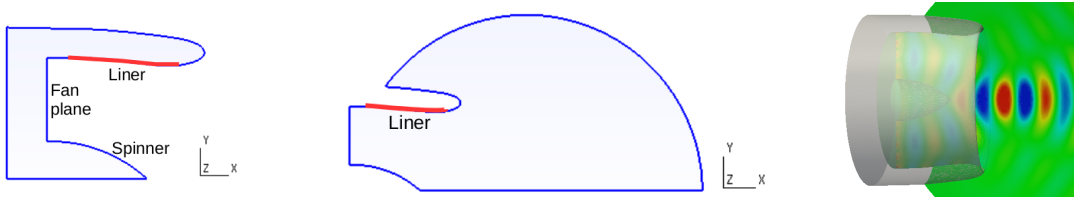

\begin{center}
\begin{minipage}{0.28\textwidth}
\includegraphics[scale=0.35]{engine_2d2nd_dropColor.png}
\end{minipage}
\begin{minipage}{0.38\textwidth}
\includegraphics[scale=0.4]{acoustic_2d2nd_dropColor.png}
\end{minipage}
\quad
\begin{minipage}{0.25\textwidth}
\includegraphics[scale=0.2]{engine-acoustic-zoomin11.png}
\end{minipage}
\caption{Two-dimensional sections of turbofan engine geometry (left), nacelle intake (middle), and acoustic liner (indicated in red). The right picture shows the three-dimensional geometry with a typical example of noise radiation from the fan plane.	}
\label{2dGeo}
\end{center}
\end{figure}

Because most of the aircraft noise energy is in the low-frequency range\cite{Leventhall2003},
we typically set the wavenumber range as $\Lambda_1 = [5, 10] \subseteq \mathbb{R}$.
To have adequate mesh resolution to ensure accuracy for wavelength $\lambda=2\pi/k \in [0.2\pi, 0.4\pi]$, generally 10 points per wavelength is sufficient \cite{MR1639879, Thompson1994}. In the meshes we use, we set the characteristic length to be 0.05 and obtain a tetrahedral mesh of the acoustic domain with 152,891 vertices and 819,554 elements.

We assume that the wavenumber and the sound noise amplitude are uniformly distributed, and in particular $\mu_{\rm r}$ and $\mu_{\rm i}$ range from 10 to 30. Furthermore, we take the noise source $g_{\Gamma_1}(\mathbf{x})= 1+\sqrt{y^2+z^2}\cos\big(10\pi(y+z)\big)$, depicted in Figure~\ref{g_profile_andSVD} (left). This function is chosen so that the sound pressure level is higher around the fan than near the axis.

\begin{figure}[h!]
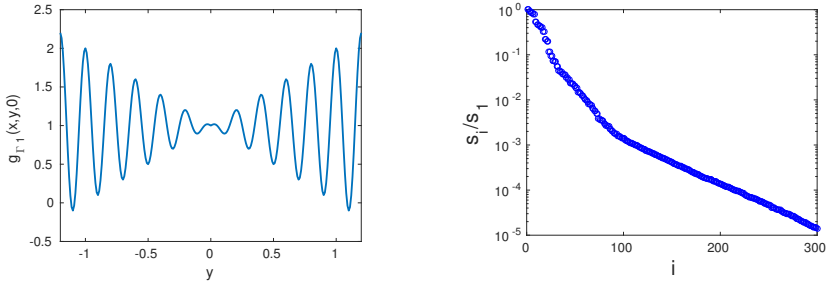

\centerline{
\includegraphics[scale=0.35]{g_gamma1.pdf}
\qquad
\includegraphics[scale=0.35]{svd_smp5ext.pdf}
}
\caption{Left: profile of the noise source term $g_{\Gamma_1}(\mathbf{x})$ with the the z-coordinate fixed to zero. Right: the leading 300 singular values ($s_i = \sqrt{\lambda_i}$) of the snapshot matrix for the acoustic pressure $p$.}
\label{g_profile_andSVD}
\end{figure}

We now study the efficiency and the parallel scaling performance of the full-order Helmholtz solver in an un-thorough manner because our main focus is not on the full-order state problem. The deterministic finite-element solutions of the Helmholtz equation (\ref{disHelm}), for a fixed input of $(\xi_{\rm r},\xi_{\rm i},k,\mu_{\rm r},\mu_{\rm i})$, are computed with different number of processors given a relative tolerance $10^{-6}$. Each processor is an Intel(R) Xeon(R) CPU E5-2670 @2.60GHz.
The average execution time of several distinct realizations are displayed in Table \ref{fomParallelTimeTable}. It splits into two sequential steps: the time for building the finite element matrices ($\mathbf{M}, \mathbf{S}, \mathbf{K}_2, \widetilde{\mathbf{K}}_2, \widetilde{\mathbf{K}}_4, \mathbf{I}_{\Gamma_1}$) and the time for final assembly and solving the system $\mathbf{Ap}=\mathbf{b}$. The strong scaling efficiency (Eff.) given in the table, as a percentage of linear,  is the ratio of the amount of time with one processor to the product of the amount of time with multiple processors and the number of processors. 

In building the FE matrices, the solver features super-linear parallel scaling efficiency (\mbox{Eff. }$> 1$) mostly because of the super-linear speedup of RAM access time, whereas this step is executed only once for a stochastic solution. 
Although the system assembly and solving step has good scalability with few processors, the computation is still expensive if it needs to be executed tens of thousands of times in a stochastic sampling or collocation method for SPDEs. The computation is even more intensive in an optimization problem, where the stochastic Helmholtz equation and its adjoint counterpart would be iterated hundreds of times. This motivates us to apply model-order reduction techniques (still in parallel) with the aim of significantly reducing the computational cost.

\begin{table}[h!]
\begin{center}
\caption{Execution time of the full-order Helmholtz solver with different number of processors. Eff.~measures the strong scaling efficiency as a percentage of linear.}
\begin{tabular}{clll}
\toprule
\# Processors	& \multicolumn{2}{c}{CPU time (sec.)} \\
    \cmidrule(l){2-3}
				 & build FE matrices (Eff.)&  assemble \& solve system (Eff.) \\
\midrule
16 & 2.994	(404.6\%) &	22.49 (55.3\%)\\
8 & 6.180	(392.1\%)	&	31.73 (78.5\%) \\
4 & 15.25 (317.8\%)	&	51.42 (96.8\%)\\
2 & 64.84	(149.5\%) &	99.87 (99.7\%)\\
1 & 193.84	&	199.14 \\
\bottomrule
\end{tabular}
\label{fomParallelTimeTable}
\end{center}
\end{table}

\subsection{Performance of the reduced-order model} 

The reliability of the reduced stochastic optimization problem (\ref{Jr-discr}) depends on the accuracy of the reduced-order model, and essentially on the reduced basis construction. 
Because we include uncertainties in the wavenumber $k$ and noise source amplitude $\mu$, an effective POD basis for the stochastic Helmholtz model should take into account knowledge associated with these uncertainties. 
Moreover, the basis should contain sensitivities induced by the variation of the impedance (control variable).  
To this end, we take 720 samples for the quadruple $(k,\mu,\xi_{\rm r},\xi_{\rm i})$ from the set
$$ \Xi_{\rm smp} = \{k_1,\ldots, k_{40} \} \times \{ 1, i \} \times \{0.05, 0.5, 2\} \times \{-0.05,-0.5,-2 \},$$
for which the 40 values $\{k_j \}_{j=1}^{40}$ of the wavenumber are uniformly distributed over the interval $[5, 10]$.
The values $\{0.05, 0.5, 2\} \times \{-0.05,-0.5,-2 \}$ of $(\xi_{\rm r},\xi_{\rm i})$ represent the variations of the impedance parameter. 
We take only negative values for $\xi_{\rm i}$  because this is the range of ideal impedance parameter computed from deterministic optimization experiments \cite{Cao2007Estim}.
 Although more samples can be generated by densifyinq the sampling of $k$ and $\xi$ to increase the accuracy of the reduced-order model, our purpose is to keep the number of samples as few as possible so as to guarantee the offline computational efficiency of the POD basis construction.

For each sample in $\Xi_{\rm smp}$, the corresponding deterministic full-order Helmholtz equation is solved offline to construct the snapshot matrix from which the POD basis is determined. Figure~\ref{g_profile_andSVD} (right) plots the 300 largest singular values $s_i$ of the snapshot matrix, scaled by the leading singular value $s_1$. There is no obvious ``kink'' or ``elbow'' in the plot; the slow decay is apparent when compared to other problems which feature an ``L''-shape plot of the singular values, indicating fast decay so that few POD modes are enough for an accurate ROM construction (see, e.g.,~\cite[Figure~3]{BertagnaVeneziani} ). This fact is mainly due to the wave propagation as has been investigated in an electro-physiological problem modeling cardiac potential spreading \cite{HHpodDEIM2016}.

Nevertheless, when a sufficient number of POD modes are included, the corresponding ROM is accurate enough for a vast majority of random input data. This is demonstrated by the box plot of the reconstruction error in Figure~\ref{POD-numMode50pt}, for which the error $e_{\rm rel}$ for the input data $(\xi,\vartheta)$ with POD basis $\mathbb{Z}$ is defined as
$$e_{\rm rel}(\xi,\vartheta;\mathbb{Z}) = \frac{\parallel\mathbb{Z}\mathbf{p}_{\rm rb}(\xi,\vartheta)-\mathbf{p}(\xi,\vartheta)\parallel_{2}}{\parallel\mathbf{p}(\xi,\vartheta)\parallel_{2}}.$$
The seven boxes correspond to the reconstruction error of seven ROMs built with different number of POD modes ranging from 60 to 120, and each box plot is based on 50 realizations with the vector $(k,\mu_{\rm r},\mu_{\rm i},\xi_{\rm r},\xi_{\rm i})$ taking random values from $[5,10]\times[10,30]\times[10,30]\times[0,100]\times[-100,100]$.
Each box spans the first quartile to the third quartile (the interquartile range IQR), and the central red line segment inside shows the median. Points are drawn as outliers (red +) if they are at least 1.5*IQR above the third quartile or  below the first quartile. The ``whiskers'' of each box extend to the most extreme data points which are not outliers.
As we can see, the error median decays below 5\% when the number of POD modes increases to 80 and significantly less when the number is at least 90. 
Although the outliers hold errors around 15\%, they only constitute a minority. Moreover, further experiments show that these large errors generally correspond to local inaccuracy rather than global sound radiation.
We also study the ROM accuracy with ten testing parameter values fixed, the corresponding relative reconstruction error for different ROMs is plotted in the left of Figure~\ref{POD-numMode-fixed}. The fluctuation near 100 is natural because the testing values are ranging largely outside the sampling range for POD basis construction.
Overall, we observe that a ROM constructed with at least 80 POD modes features enough practical accuracy. Henceforth, we  take 90 POD modes for the reduced model in the stochastic optimization problem that follows. Figure~\ref{POD-numMode-fixed} (right) shows the accuracy of the model with 100 random realizations, in which the red dashed line denotes the median. Alternatively, one can choose the number $N$ of POD modes in a way such that
\begin{equation}
\bigg(\sum\limits_{i=1}^N s_i^2\bigg)^{1/2} < \tau \bigg(\sum\limits_{i=1}^d s_i^2\bigg)^{1/2}
\end{equation}
where $\tau \in (0, 1)$ represents the amount of information of the sample $\mathbf{P}=[\widetilde{\mathbf{p}}^1_S, \cdots, \widetilde{\mathbf{p}}^m_S]$ that the POD modes have to capture, by the error estimator (\ref{L2PODerr}). For our problem, $\tau$ is recommended to be at least 0.995 (corresponding to $N=74$) to maintain enough accuracy of the ROM.

\begin{figure}[!h]
\begin{center}
\includegraphics[scale=0.5]{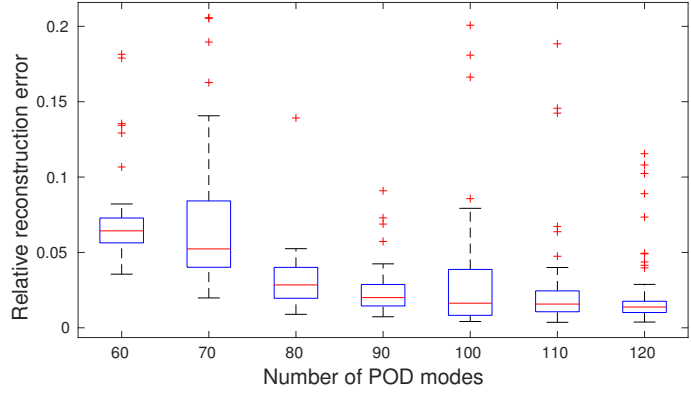} 
\caption{Box plots of the reconstruction error of seven ROMs built with different number of POD modes. Each box plot is based on 50 realizations with the vector $(k,\mu_{\rm r},\mu_{\rm i},\xi_{\rm r},\xi_{\rm i})$ taking random values from $[5,10]\times[10,30]\times[10,30]\times[0,100]\times[-100,100]$.}
\label{POD-numMode50pt}
\end{center}
\end{figure}
    
\begin{figure}[!h]
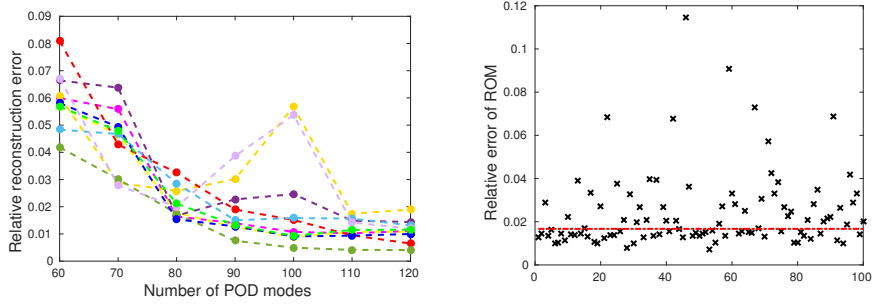

\centerline{
\includegraphics[scale=0.38]{errROM_numMode.pdf}\quad
\includegraphics[scale=0.38]{errROM_100pt90mode_median.pdf} 
}
\caption{Left: ROM accuracy with ten testing parameter values fixed. Right: relative error (black x) of the 90-mode ROM for 100 random realizations. The red dash line denotes the median.}
\label{POD-numMode-fixed}
\end{figure}

In Table \ref{romTimeCmpTable}, the computational efficiency of the ROM relative to the FOM is observed by running simulations on 16 processors. We assume that there are a total of $n_pq$ realizations of the Helmholtz equation to be performed online, where $n_p$ denotes the number of processors. With the ROM, the number of jobs distributed to each processor would be $q$ rather than $n_pq$ as compared with the FOM. The reason is that the ROM keeps only serial dense matrices ($\mathbf{M}_{\rm r}, \mathbf{S}_{\rm r}, \mathbf{K}_{\rm 2r}, \widetilde{\mathbf{K}}_{\rm 2r}, \widetilde{\mathbf{K}}_{\rm 4r}, \mathbf{I}_{\rm r}$) which are of size 90 and can be stored on each processor. 
As such, $n_p$ reduced-order Helmholtz equations can be solved simultaneously and individually. 
For a stochastic Helmholtz solution, the number of realizations $n_pq$ needs to be at least one thousand. In such case, the ROM has a significant gain on efficiency no matter if one counts the total execution time or just the online simulation time because 
$79.15+0.04q \ll 17,750+79.15+0.04q \ll 27.12+50.71n_{\rm p}q$ 
when $q$ is in the thousands or larger.
Note that solving a reduced-order system is $914.3n_p$ faster than computing a full-order system.
The online computational time of the ROM can be further reduced if we project the reduced basis offline, when there is no need to build the finite element space online (e.g., for visualization).

\begin{table}[h!]
\begin{center}
\caption{Comparison of execution time for $n_pq$ realizations of the full-order model (FOM) and the reduced-order model (ROM), including solving their adjoint equations and the sensitivity w.r.t.~the impedance parameter. Here $n_p$ denotes the number of processors used for the simulations. The time displayed below corresponds to $n_p = 16$ in particular.  }
\begin{small}
\begin{tabular}{cllclc}
\toprule
& & \multicolumn{2}{c|}{FOM} & \multicolumn{2}{c}{ROM} \\
     \cmidrule(l){3-6} 
& & CPU time (s) & \# exec./proc & CPU time (s) & \# exec./proc \\
\midrule
offline & construct POD basis & --- & --- & 17,750 & 1   \\
\midrule
\multirow{7}{*}{online}
& load mesh & 24.13 & 1 & 24.13 & 1 \\
& build FE matrices & 2.99 & 1 & 2.99 & 1 \\
& import basis & --- & --- & 47.41 & 1 \\
& project basis & --- & --- & 4.62 & 1 \\
& assemble \& solve state & 22.49 & $n_{\rm p}q$ & 0.03 & $q$ \\
& assemble \& solve adj. & 28.22 & $n_{\rm p}q$ & 0.01 & $q$ \\
& compute sensitivity & 0.10 & $n_{\rm p}q$ & 6.7e-05 & $q$ \\
\midrule
online & total & 27.12+50.71$n_{\rm p}q$ & & 79.15+0.04$q$& \\
\bottomrule
\end{tabular}
\end{small}
\label{romTimeCmpTable}
\end{center}
\end{table}

\subsection{Impedance stochastic optimization using the CVaR measure}\label{sec:resultCVaR}

In this section we apply the ROM to impedance stochastic optimization based on the CVaR measure. First, we verify the accuracy and efficiency of the ROM applied to the optimization setting.
Because the full-order Helmholtz solver is computationally much too expensive for  stochastic problems, we instead use deterministic impedance optimization problems for this validation. Specifically, we consider
the deterministic full-order problem 
$\min\limits_{\xi} \frac{1}{2}\mathbf{p}(\xi;\vartheta^o)^T\mathbf{M}\mathbf{p}(\xi;\vartheta^o)$ 
and compare with the deterministic reduced-order problem 
$\min\limits_{\xi} \frac{1}{2}\Vert \mathbf{p}_{\rm rb}(\xi;\vartheta^o,\mathbb{Z})\Vert_{2}^2 $. 
for a fixed $\vartheta^o$ is fixed, taking the value $(k, \mu_{\rm r}, \mu_{\rm i}) = (10,30,30)$. The performance of applying the reduced basis $\mathbb{Z}$ is shown in Figure~\ref{compFOMandROMitr}.
Starting from the initial guess $\xi=10+10i$, the optimization iterations on the FOM and ROM converge to similar optimal values: $\xi_*=1.154-1.425i$ versus $\xi_*=1.141-1.412i$.  With the same stopping criteria, the full-order optimization use 15 iterations and a total of 23 evaluations of the state problem, compared to 13 iterations and 20 state evaluations for the  reduced-order optimization.

\begin{figure}[h!]
\begin{center}
\includegraphics[scale=0.45]{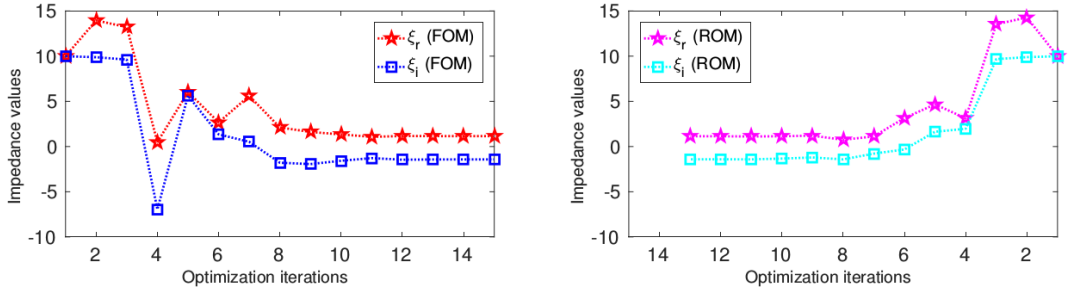}
\caption{Comparison of a deterministic impedance optimization with the FOM and ROM, showing that the ROM can be effectively used for optimization problems.}
\label{compFOMandROMitr}
\end{center}
\end{figure}

We now focus only on the reduced-order stochastic optimization problem (\ref{Jr-discr}) for which the parameter $\varepsilon$ is chosen as $10^{-4}$. We take the value of $\Vert \mathbf{p_{\rm rb}}(\xi^\infty;\vartheta^o)\Vert^2_{2} = 86,588,500$ as the coefficient $\gamma_{\rm p}$ to scale the energy of the acoustic potential in (\ref{Jr-discr}), where $\xi^\infty$ is the infinity value that corresponds to a hard-wall condition imposed on $\Gamma_2$. We  solve the reduced-order stochastic Helmholtz equation at 16,000 Monte Carlo samples distributed equally on 16 processors, i.e., each processor handles 1,000 realizations of the Helmholtz equation for each evaluation of $J^Q_{\rm r}$ and its gradient.

We present in Table \ref{optControlTableQ16k} the computational results for the probability levels $\beta \in \{0.5, 0.75, 0.95\}$. The BFGS iteration starts at $\xi^1 = 10+10i$ and stops when one of the following stopping criterion is satisfied: the maximum iteration number (100) is exceeded, the relative reduction of $\nabla J^Q_{\rm r}$ or $J^Q_{\rm r}$ is more than $10^{-6}$ (i.e.~$|\nabla J^Q_{\rm r}(\xi^{k+1})|\leq 10^{-6}|\nabla J^Q_{\rm r}(\xi^1)|$ or $|J^Q_{\rm r}(\xi^{k+1})|\leq 10^{-6}|J^Q_{\rm r}(\xi^1)|$), the relative step change is less than $10^{-6}$ (i.e.~$|\xi^{k+1}-\xi^k|\leq10^{-6}|\xi^k|$). For all test cases, the optimization solver converges within 30 iterations. Even though a large number of PDEs (more than 36,000) are solved on each processor, the procedure takes at most 487.9 seconds. A detailed plot of the BFGS iteration history corresponding to $\beta=0.95$ is shown in Figure~\ref{romItrFunbeta095}. In the plot, the control variable $\alpha$ and the cost function $J^Q_{\rm r}$ are scaled by 10 for better visualization.


\begin{table}[h!]
\begin{center}
\caption{Impedance optimization with $Q=$ 16,000 Monte Carlo samples equally distributed on 16 processors. }\label{optControlTableQ16k}
\begin{small}
\begin{tabular}{llll}
\toprule
measure & CVaR ($\beta=0.5$)  & CVaR ($\beta=0.75$) & CVaR ($\beta=0.95$)\\
\midrule
optimal impedance & $0.8576-1.2i $    & $0.8893-1.218i  $ & $0.9752-1.267i$ \\
optimal $\alpha$ & 0.2787  & 0.3588 & 0.4866\\
final $J_{\rm r}^Q$ & 0.1889  & 0.2183 & 0.2660\\
\# BFGS iters & 16  & 17  & 27\\
\# PDE solves/proc & 40,000 &  36,000 & 62,000 \\
online exec.~time & 311.2 sec  &  280.9 sec & 487.9 sec\\
\bottomrule
\end{tabular}
\end{small}
\end{center}
\end{table}

\begin{figure}[h!]
\begin{center}
\includegraphics[scale=0.5]{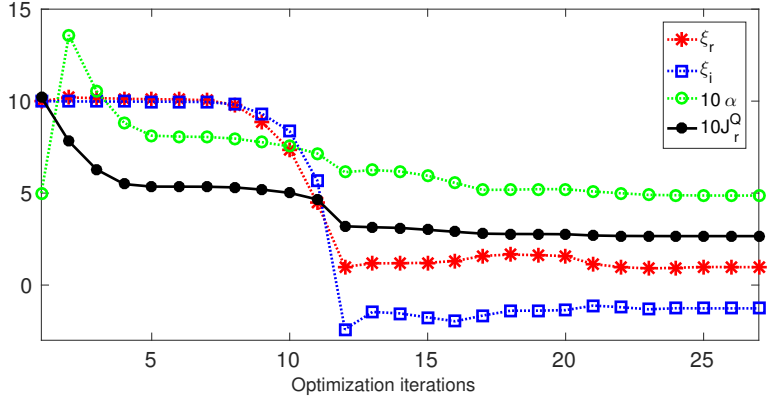} 
\caption{BFGS iterations of the optimization of the CVaR measure with $\beta=0.95$. The control variable $\alpha$ and the cost function $J^Q_{\rm r}$ are scaled by 10 in this plot for better visualization.}
\label{romItrFunbeta095}
\end{center}
\end{figure}

The optimal values listed in Table \ref{optControlTableQ16k}
 provide good suggestions for the acoustic liner design for different significance levels. For instance, with the impedance value $\xi_*=0.8576-1.2i$ we are 50\% sure that the acoustic pressure energy $\Vert\mathbf{p}_{\rm rb}\Vert^2_{2}$ will not exceed $0.2787\gamma_p$ (roughly speaking, this 50\%-threshold is the $\text{VaR}_{0.5}$ value of the energy associated with $\xi_*$). This impedance is optimal in reducing the mean of the acoustic pressure energies above those 50\%-thresholds.
It is also interesting to see that the optimal impedance $\xi_*$ varies slightly with the probability level $\beta$: the higher the level, the larger the values of $|\xi_{\rm r}|$ and $|\xi_{\rm i}|$.

As can be seen from Table \ref{optControlTableQ16k}, we are 95\% sure that the acoustic noise energy can be optimally controlled within 48.66\% of $\gamma_p$, which measures the noise level associated with the hard-wall condition. To have a further indication of the extent of fan noise reduction, we illustrate the mean and standard deviation of the spatial noise energy function $\frac{n}{\gamma_p} |p|^2(\mathbf{x})$ in Figure~\ref{pnorm2onSmpMeanSD} (left and right respectively). Here $n$ denotes the number of degrees of freedom of the full-order model. The first row corresponds to the initial guess $\xi = 10+10i$ whereas the second row corresponds to the optimal impedance $\xi=0.9752-1.267i$. A slice perpendicular to the spinner axis is added in each picture for three-dimensional visualization. As desired, the fan noise level is significantly reduced  when an optimal impedance parameter is taken. We also show, in Figure~\ref{pRek10s1010}, the noise distribution (real part of the pressure $p$) associated with different impedance values, fixing the random parameter $(k,\mu_{\rm r},\mu_{\rm i}) = (10,10,10)$.  The noise is mostly confined near the fan inlet (second row of Figure~\ref{pRek10s1010}) when an optimal impedance value is taken whereas it propagates to the far-field area (first row of Figure~\ref{pRek10s1010}) using the initial impedance value.

\begin{figure}[h!]
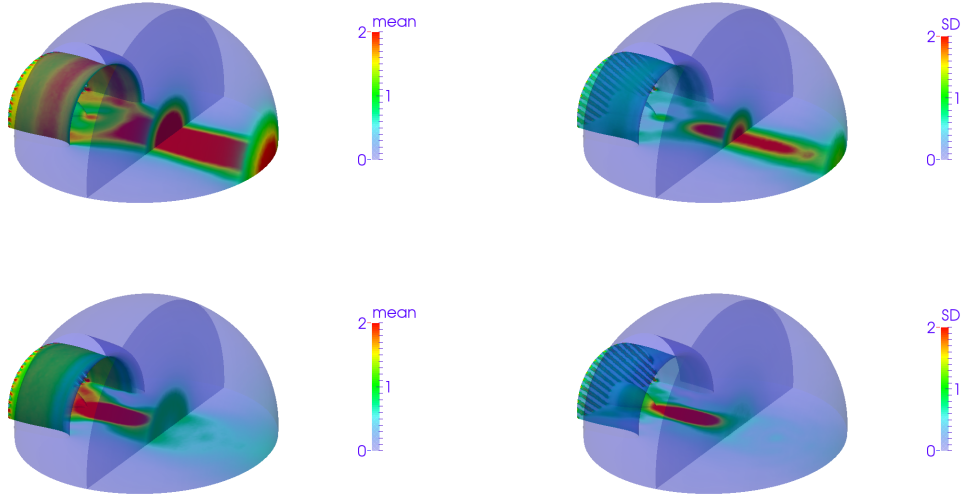

\centerline{
\includegraphics[scale=0.18]{initial_pnorm2_onSmpMean.png}
\includegraphics[scale=0.18]{initial_pnorm2_onSmpSD.png}
}
\centerline{
\includegraphics[scale=0.18]{opt_pnorm2_onSmpMean.png}
\includegraphics[scale=0.18]{opt_pnorm2_onSmpSD.png}
}
\caption{The means and standard deviations of the spatial noise energy function $\frac{n}{\gamma_p} |p|^2(\mathbf{x})$ corresponding to different impedance values. Here $n$ is the number of degrees of freedom of the FOM. First row: for the initial guess of the impedance $\xi=10+10i$; second row: for the optimal impedance $\xi=0.9752-1.267i$.}
\label{pnorm2onSmpMeanSD}
\end{figure}

\begin{figure}[h!]
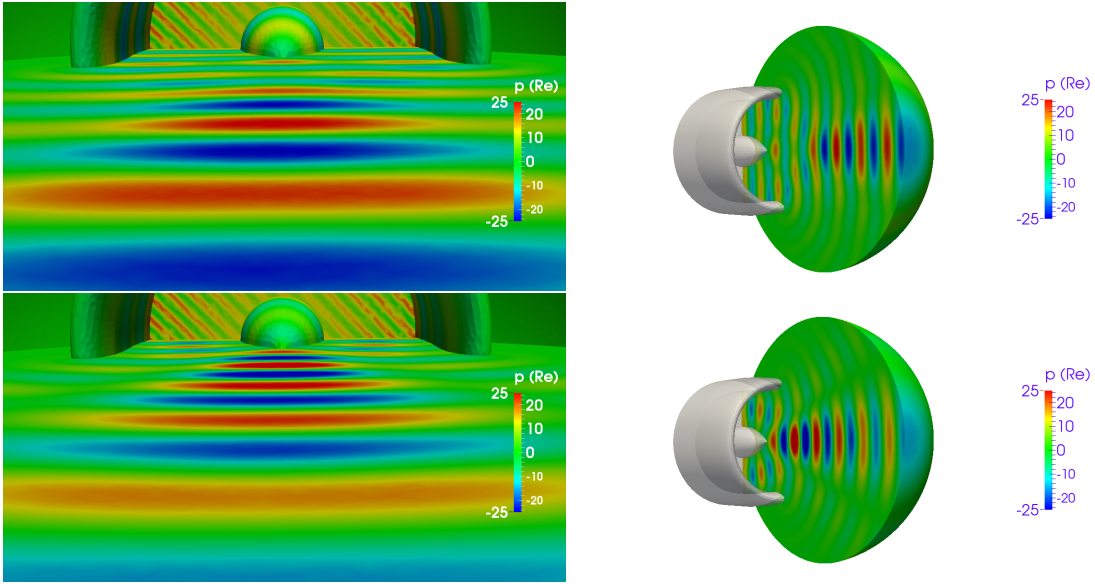

\centerline{
\includegraphics[scale=0.18]{pRe_initZoomIn.png}
\quad
\includegraphics[scale=0.18]{pRe_initZoomOut.png}
}
\centerline{
\includegraphics[scale=0.18]{pRe_optZoomIn.png}
\quad
\includegraphics[scale=0.18]{pRe_optZoomOut.png}
}
\caption{The real part of the acoustic pressure computed with different impedance values fixing $(k,\mu_{\rm r},\mu_{\rm i}) = (10,10,10)$. First row: for the initial guess of impedance $\xi=10+10i$; second row: for the optimal impedance $\xi=0.9752-1.267i$.}
\label{pRek10s1010}
\end{figure}

\section{Mathematical and numerical analyses}\label{analysis}

In this section, we provide mathematical analyses of the Helmholtz equation and the corresponding optimization problem. 

Recall the previously defined spaces: $V_0 = H^1_{\Gamma_1}(D; \mathbb{C}) = \{\phi\in H^1(D; \mathbb{C}): \phi|_{\Gamma 1} = 0 \}$, 
$V_{\vartheta} = \{p\in H^1(D; \mathbb{C}): p|_{\Gamma 1} = \mu g_{\Gamma_1} \}$,  
$Y_0 = L^2_{\rho}(\Lambda; V_0)$,
and $Y = \{p(\cdot,\vartheta): \Lambda \to V_{\vartheta},  \int_{\Lambda} \Vert p(\cdot,\vartheta)\Vert_{V_{\vartheta}}^2\rho(\vartheta)d\vartheta < \infty \}$.
Norms or semi-norms on a geometric domain $\Sigma$ are denoted by
$\norm{u}_{k,\Sigma} = \norm{u}_{H^k(\Sigma;\mathbb{C})}$ and $|u|_{k, \Sigma}=\norm{D^k u}_{0,\Sigma}$.
We further introduce the $k$-dependent norm on $H^1(\Sigma;\mathbb{C}): \norm{u}_{\mathcal{H},\Sigma} = k\norm{u}_{0, \Sigma} + |u|_{1, \Sigma}$. Throughout the analysis, we frequently use the notation $C$, with or without subscripts, to denote a generic positive constant or continuous function.

\subsection{Well-posedness analysis}\label{wellposeAna}
Assume $g_{\Gamma_1}(\mathbf{x}) \in H^{1/2}(\Gamma_1)$ and that $p_g(\mathbf{x}) \in H^1(D)$ is the unique solution of
\begin{equation}\label{PDELapla}
\left\{
\begin{array}{ll}
-\Delta p_g(\mathbf{x}) = 0 &  \mbox{in } D \\[0.2cm]
p_g(\mathbf{x}) = g_{\Gamma_1}(\mathbf{x}) &  \mbox{on } \Gamma_1 \\[0.2cm]
\frac{\partial p_g(\mathbf{x})}{\partial \mathbf{n}} = 0 & \mbox{on } \partial D\backslash\Gamma_1 
\end{array}
\right.
\end{equation}
that is the limit case of (\ref{PDEstate}) as $k\to 0$, omitting the random parameter in the Dirichlet boundary condition.
Assuming $p$ solves the Helmholtz equation (\ref{PDEstate}), the lifted solution $\widetilde{p} = p - \mu p_g$ then satisfies
\begin{equation}\label{PDEstateLift}
\left\{
\begin{array}{ll}
-\Delta \widetilde{p}(\mathbf{x},\vartheta) - k^2 \widetilde{p}(\mathbf{x},\vartheta) = \widetilde{f} &  \mbox{in } D \\[0.2cm]
\widetilde{p}(\mathbf{x},\vartheta) = 0 &  \mbox{on } \Gamma_{\rm d} \\[0.2cm]
\frac{\partial \widetilde{p}(\mathbf{x},\vartheta)}{\partial \mathbf{n}} = 0 & \mbox{on }\Gamma_{\rm n}  \\[0.2cm]
\frac{\partial \widetilde{p}(\mathbf{x},\vartheta)}{\partial \mathbf{n}} = (i\widetilde{\beta}-\widetilde{\alpha}) \widetilde{p}(\mathbf{x},\vartheta) + \widetilde{g} & \mbox{on } \Gamma_{\rm r} ,
\end{array}
\right.
\end{equation}
where the Dirichlet, Neumann, and Robin boundaries are given by $\Gamma_{\rm d} = \Gamma_1$, $\Gamma_{\rm n}=\Gamma_3\cup\Gamma_5$, and $\Gamma_{\rm r}=\Gamma_2\cup\Gamma_4$, respectively. The constants and right-hand sides are given by
\begin{equation}\label{PDEstateLiftRHS}
\widetilde{\beta} = \left\{
\begin{array}{ll}
-\frac{\xi_{\rm r}}{|\xi|^2}k	&	\text{on	} \Gamma_2 \\[0.1cm]
-k	&	\text{on	} \Gamma_4
\end{array}
\right.
\qquad
\widetilde{\alpha} = \left\{
\begin{array}{ll}
\frac{\xi_{\rm i}}{|\xi|^2}k	&	\text{on	} \Gamma_2 \\[0.1cm]
0	&	\text{on	} \Gamma_4
\end{array}
\right.
\qquad
\widetilde{f} = \mu k^2p_g 
\qquad
\widetilde{g} = \left\{
\begin{array}{ll}
-i\mu\frac{k}{\xi}p_g	&	\text{on	} \Gamma_2 \\[0.1cm]
-i\mu kp_g	&	\text{on	} \Gamma_4.
\end{array}
\right.
\end{equation}
Note that $\widetilde{\beta}$ and $\widetilde{\alpha}$ satisfy
\begin{equation}\label{bddBetaAlp}
0 < C_{\beta,-}k \leq -\widetilde{\beta} \leq C_{\beta,+}k \quad\mbox{and}\quad |\widetilde{\alpha}|\leq C_{|\alpha|}k \quad \mbox{on}~ \Gamma_{\rm r}
\end{equation}
by setting $C_{\beta,-}=\min\{\frac{\xi_{\rm r}}{|\xi|^2} ,1\}, C_{\beta,+}=\max\{\frac{\xi_{\rm r}}{|\xi|^2} ,1\}$, 
 and $C_{|\alpha}| = \frac{|\xi_{\rm i}|}{|\xi|^2}$.
Because $\Lambda$ is bounded, $\mu p_g $ belongs to $Y$.
A weak formulation to (\ref{PDEstate}) is then given by: find $p = \mu p_g + \widetilde{p}\in \mu p_g + Y_0 = \mu p_g + L^2_\rho(\Lambda; H^1_{\Gamma_1}(D;\mathbb{C}))$ such that
$$
\int_\Lambda a_\vartheta(\widetilde{p},\phi)\rho(\vartheta)d\vartheta = \int_\Lambda b_\vartheta(\phi)\rho(\vartheta)d\vartheta \qquad \forall \phi \in Y_0,
$$
where
$$
b_\vartheta(\phi) = \mu k^2\int_D p_g\overline{\phi}d\mathbf{x}-i\mu\frac{k}{\xi}\int_{\Gamma_2}p_g\overline{\phi}ds-i\mu k\int_{\Gamma_4}p_g\overline{\phi}ds.
$$

The well-posedness of deterministic solutions to (\ref{PDEstate}) is proved in \cite[Theorem 1]{Cao2007Uncertain}, where the unique solvability is provided except for a countable set of $k$. Here, we use a classical approach to show the unique solvability for any values of the random parameter $\vartheta$.

\begin{thm}\label{prop1}
For any $\xi$ with $\xi_{\rm r}>0$ and every $\vartheta = [k, \mu_{\rm r}, \mu_{\rm i}] \in \mathbb{R}^{+}\times \mathbb{R}^2$, there exists a unique weak solution $p(\cdot,\vartheta;\xi)\in V_{\vartheta}$ solving \eqref{PDEstate}. 
\end{thm}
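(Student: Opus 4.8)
The plan is to recast the problem, for each fixed $\vartheta=[k,\mu_{\rm r},\mu_{\rm i}]$, as a deterministic variational problem and then invoke the Fredholm alternative. Using the lifting $\widetilde{p}=p-\mu p_g$ of \eqref{PDEstateLift}, it suffices to produce a unique $\widetilde{p}\in V_0$ with $a_\vartheta(\widetilde{p},\phi)=b_\vartheta(\phi)$ for all $\phi\in V_0$; the solution of \eqref{PDEstate} is then recovered as $p=\mu p_g+\widetilde{p}\in V_\vartheta$. First I would verify the hypotheses of the Lax--Milgram/Fredholm machinery: $b_\vartheta$ is a bounded antilinear functional on $V_0$ and $a_\vartheta$ is a bounded sesquilinear form, both estimates following from Cauchy--Schwarz together with the trace inequality $\norm{\cdot}_{0,\Gamma_{\rm r}}\le C\norm{\cdot}_{1,D}$ and the boundedness of $\Lambda$.

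Next I would split $a_\vartheta=a_0+a_{\rm c}$, where the principal part $a_0(p,\phi)=\int_D\nabla p\cdot\nabla\overline{\phi}\,d\mathbf{x}$ carries all the coercivity and $a_{\rm c}$ collects the remaining lower-order volume and boundary terms $-k^2\int_D p\overline{\phi}\,d\mathbf{x}+\frac{ik}{\xi}\int_{\Gamma_2}p\overline{\phi}\,ds+ik\int_{\Gamma_4}p\overline{\phi}\,ds$. Because $V_0=H^1_{\Gamma_1}(D;\mathbb{C})$ enforces a homogeneous Dirichlet condition on $\Gamma_1$, the Poincar\'e inequality makes $a_0$ coercive on $V_0$, so by Lax--Milgram the operator $A_0:V_0\to V_0'$ it induces is an isomorphism. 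The operator $A_{\rm c}$ induced by $a_{\rm c}$ is compact: the volume term factors through the compact embedding $H^1(D)\hookrightarrow L^2(D)$ (Rellich--Kondrachov) and each boundary term through the compact trace map $H^1(D)\to L^2(\Gamma_{\rm r})$. Hence $A_0+A_{\rm c}$ is a compact perturbation of an isomorphism, i.e.\ Fredholm of index zero, and the Fredholm alternative reduces existence to uniqueness.

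For uniqueness I would test the homogeneous equation ($b_\vartheta\equiv0$) against $\phi=\widetilde{p}$ and take imaginary parts. The gradient and $-k^2$ terms are real, while $\frac{ik}{\xi}=\frac{k\xi_{\rm i}+ik\xi_{\rm r}}{|\xi|^2}$ contributes
\[
\operatorname{Im}a_\vartheta(\widetilde{p},\widetilde{p})=\frac{k\xi_{\rm r}}{|\xi|^2}\int_{\Gamma_2}|\widetilde{p}|^2\,ds+k\int_{\Gamma_4}|\widetilde{p}|^2\,ds=0.
\]
Since $k>0$ and $\xi_{\rm r}>0$, both integrals vanish, so $\widetilde{p}=0$ on $\Gamma_2\cup\Gamma_4=\Gamma_{\rm r}$; feeding this back into the Robin condition of \eqref{PDEstateLift} gives $\partial_{\mathbf n}\widetilde{p}=0$ on $\Gamma_{\rm r}$ as well, so the full Cauchy data of $\widetilde{p}$ vanish on $\Gamma_4$.

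The main obstacle is the final step: concluding $\widetilde{p}\equiv0$ in $D$ from vanishing Cauchy data on the open piece $\Gamma_4$. I would extend $\widetilde{p}$ by zero across $\Gamma_4$ into a neighborhood on the far side; because both $\widetilde{p}$ and $\partial_{\mathbf n}\widetilde{p}$ vanish there, the extension lies in $H^1$ and is a weak solution of $-\Delta\widetilde{p}-k^2\widetilde{p}=0$ on the enlarged domain, with no spurious interface contribution. Interior elliptic regularity makes this extension real-analytic, and since it vanishes on an open set the identity theorem forces it to vanish on the whole connected domain, whence $\widetilde{p}\equiv0$. This is where the constant-coefficient ellipticity of the Helmholtz operator is essential (no real characteristics, hence genuine unique continuation); the estimates in the Fredholm step are otherwise routine. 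Uniqueness then yields existence through the Fredholm alternative, and undoing the lifting completes the proof.
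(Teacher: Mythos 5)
Your proposal is correct and follows essentially the same route as the paper: lift to a homogeneous Dirichlet problem, establish the Fredholm alternative for $a_\vartheta$ on $H^1_{\Gamma_1}(D;\mathbb{C})$, and prove uniqueness by taking $\operatorname{Im}a_\vartheta(q,q)=0$ to kill the trace on $\Gamma_2\cup\Gamma_4$, then extending by zero across the Robin boundary and invoking unique continuation. The only cosmetic difference is that you obtain the Fredholm property by explicitly splitting off a coercive principal part plus a compact perturbation, whereas the paper verifies a G\r{a}rding inequality (via the trace theorem with a small parameter $\epsilon$) and cites standard results to the same effect.
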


\begin{proof}
We only need to prove the existence and uniqueness of the solution of the problem: find $\widetilde{p}(\cdot,\vartheta)\in H^1_{\Gamma_1}(D;\mathbb{C})$ such that
\begin{equation}\label{liftVarDeter}
a_\vartheta(\widetilde{p}, \phi_D) = b_\vartheta(\phi_D), \qquad \forall \phi_D \in H^1_{\Gamma_1}(D;\mathbb{C}).
\end{equation}
The sesquilinear form $a_\vartheta(\cdot, \cdot): H^1_{\Gamma_1}(D;\mathbb{C}) \times H^1_{\Gamma_1}(D;\mathbb{C}) \to \mathbb{C}$ is continuous. Indeed,
\begin{equation}\label{contiOnHNm}
\begin{aligned}
|a_{\vartheta}(u,v)| \leq& \norm{\nabla u}_{0,D}\norm{\nabla v}_{0,D} + k^2\norm{u}_{0,D}\norm{v}_{0,D} + C(\xi)k\norm{u}_{0,\partial D}\norm{v}_{0,\partial D} \\
 \leq & 2\norm{u}_{\mathcal{H},D}\norm{v}_{\mathcal{H},D} + C(\xi)\big(k^2\norm{u}_{0,D}\norm{\nabla u}_{0,D}\norm{v}_{0,D}\norm{\nabla v}_{0,D}\big)^{1/2}\\
 \leq & 2\norm{u}_{\mathcal{H},D}\norm{v}_{\mathcal{H},D} + C(\xi)\big(k^2\norm{u}_{0,D}\norm{ v}_{0,D}+\norm{\nabla u}_{0,D}\norm{\nabla v}_{0,D}\big)\\
 \leq & 2\norm{u}_{\mathcal{H},D}\norm{v}_{\mathcal{H},D} + C(\xi)\norm{u}_{\mathcal{H},D}\norm{v}_{\mathcal{H},D}
 \leq  C_0(\xi)\norm{u}_{\mathcal{H},D}\norm{v}_{\mathcal{H},D}.
\end{aligned}
\end{equation}
We also observe that $$\Re[a_\vartheta(\widetilde{p}, \widetilde{p})] = |\widetilde{p}|^2_{1,D} - k^2 \norm{\widetilde{p}}^2_{0,D} + \frac{k\xi_{\rm i}}{|\xi|^2}\norm{\widetilde{p}}^2_{0,\Gamma_2} 
\geq \norm{\widetilde{p}}^2_{1,D} - (1+k^2) \norm{\widetilde{p}}^2_{0,D} - \frac{k|\xi_{\rm i}|}{|\xi|^2}\norm{\widetilde{p}}^2_{0,\Gamma_2}.$$
By the trace theorem \cite[Theorem 1.5.1.10]{Grisvard1985}, 
$
\norm{\widetilde{p}}^2_{0,\partial D} \leq C(\epsilon\norm{\nabla \widetilde{p}}^2_{1,D} + \epsilon^{-1}\norm{\widetilde{p}}^2_{0,D})
$
for any $\epsilon\in(0,1)$. Therefore, the G\r{a}rding inequality 
$$\Re[a_\vartheta(\widetilde{p}, \widetilde{p})] \geq (1-\frac{k|\xi_{\rm i}|C}{|\xi|^2}\epsilon)\norm{\widetilde{p}}^2_{1,D} - (1+k^2+\frac{k|\xi_{\rm i}|C}{|\xi|^2\epsilon}) \norm{\widetilde{p}}^2_{0,D}$$
is satisfied by choosing sufficiently small $\epsilon$ such that $1-\frac{k|\xi_{\rm i}|C}{|\xi|^2}\epsilon > 0$.
Consequently, by \cite[Theorems 2.27 and 2.34]{ellipBook2000}, the Fredholm alternative applies to the sesquilinear form $a_{\vartheta}$. That is, to prove the unique solvability of (\ref{liftVarDeter}) it is enough to show  the associated homogeneous problem has only trivial solution.


Assume $q \in H^1_{\Gamma_1}(D;\mathbb{C})$ solves the homogeneous Helmholtz equation corresponding to (\ref{PDEstateLift}) and (\ref{PDEstateLiftRHS}). Its weak formulation implies
$$\Im[a_\vartheta(q, q)] =  \frac{k\xi_{\rm r}}{|\xi|^2}\norm{q}^2_{0,\Gamma_2} + k\norm{q}^2_{0,\Gamma_4} = 0. $$
Hence $q = 0$ a.e.~on $\Gamma_2 \cup \Gamma_4$. Let us extend the domain $D$ near an interior point of $\Gamma_2$ (or $\Gamma_4$), and denote the extended domain as $D_{\rm et}$. Notice that $D_{\rm et} \supset D$ and they share the Dirichlet and Neumann boundaries. The extension
\begin{equation*}
q_{\rm et} = \left\{
\begin{array}{ll}
q	&	\mbox{if } \mathbf{x}\in D \\
0	&	\mbox{if } \mathbf{x}\in D_{\rm et}\backslash D
\end{array}
\right.
\end{equation*}
is also a weak solution in $H^1_{\Gamma_1}(D_{\rm et};\mathbb{C})$ solving the homogeneous Helmholtz equation corresponding to (\ref{PDEstateLift}) and (\ref{PDEstateLiftRHS}) with domain $D$ replaced by $D_{\rm et}$ and $\Gamma_{\rm r}$ replaced by the extended boundary. Because $q_{\rm et}$ vanishes in a sub-domain of $D_{\rm et}$, by the unique continuation principle \cite{Leis1986} it should vanish identically on $D_{\rm et}$. Therefore, $q \equiv 0$ on $D$.
Π\end{proof}

\begin{as}\label{assump}
For any $\xi$ with $\xi_{\rm r}> 0$ and given $\vartheta = [k, \mu_{\rm r}, \mu_{\rm i}] \in \mathbb{R}^{+}\times \mathbb{R}^2$ with bound constraints on $\widetilde{\beta}$ and $\widetilde{\alpha}$ as in \eqref{bddBetaAlp}, the deterministic weak solution $\widetilde{p}$ of \eqref{PDEstateLift} satisfies
\begin{equation}\label{ineqAssum}
\norm{\widetilde{p}}_{\mathcal{H},D}  \leq C_1(k,\xi)(\Vert{\widetilde{f}}\Vert_{0,D} + \norm{\widetilde{g}}_{0,\Gamma_{\rm r}}).
\end{equation}
Moreover, we assume the solution to \eqref{PDEstateLift} and \eqref{PDEstateLiftRHS} satisfies
\begin{equation}\label{ineqAssumThis}
\norm{\widetilde{p}}_{\mathcal{H},D} \leq C_2(\mu,\xi)P_\alpha(k)(\Vert{p_g}\Vert_{0,D} + \norm{p_g}_{0,\Gamma_{\rm r}}).
\end{equation}
Here $C_1(k,\xi)$ and $C_2(\mu,\xi)$ are continuous functions of $\xi$ and $k$ or $\mu$; $P_\alpha(k)$ is a polynomial in $k$ with degree $\alpha$. These coefficient functions depend only on the domain $D$.
\end{as}

\noindent
\textbf{Remark 1}. Under Assumption \ref{assump}, for any $\xi$ with $\xi_{\rm r}>0$, the stochastic Helmholtz equation (\ref{PDEstate}) has a unique solution $p \in Y$. In fact, from the boundedness of $\Lambda$ in $\mathbb{R}^+\times\mathbb{R}^2$ and the continuity of $C_2(\mu,\xi)P_{\alpha}(k)$ in (\ref{ineqAssumThis}), it is straightforward to show that $\int_\Lambda |\widetilde{p}(\cdot,\vartheta)|^2_{1,D}\rho(\vartheta)d\vartheta < \infty.$ A similar argument for $\norm{\widetilde{p}}_{0,D}$, or the Poincar\'{e} inequality, indicates that $\int_\Lambda \norm{\widetilde{p}(.,\vartheta)}^2_{0,D}\rho(\vartheta)d\vartheta < \infty.$ Therefore, $\widetilde{p}\in Y_0$ and hence $p = \mu p_g+\widetilde{p} \in Y$. To see the uniqueness, take test functions in (\ref{weakHelmState}) as $\phi = \phi_{\Lambda}(\vartheta)\phi_{D}(\mathbf{x})$ with $\phi_{\Lambda}(\vartheta) \in L^2(\Lambda)$ and $\phi_{D}(\mathbf{x}) \in H^1_{\Gamma_1}(D;\mathbb{C})$. The uniqueness of a stochastic solution is then reduced to the uniqueness of deterministic solutions, which are guaranteed by Theorem ~\ref{prop1}.
\vspace{0.2cm}

\noindent
\textbf{Remark 2}. Stability estimates for the Helmholtz equation as in Assumption \ref{assump} have been studied in many  papers, but mainly with a Robin boundary condition on a star-shaped or convex domain \cite{MelenkThesis1995, CUMMINGS2006, Moiola2014}. Therein ``star-shaped" generally means a condition as stated in the third expression of (\ref{domainCond}). The best bound for Helmholtz solutions in terms of data has been given by \cite{Spence2014} on a bounded Lipschitz domain, but does not apply to the case with mixed boundary conditions. In the following, we state that Assumption \ref{assump} holds at least under some constraints on the geometric domain, mainly referring to \cite{hetmaniuk2007}.

\begin{prop}\label{propStarshp} Assumption {\rm\ref{assump}} holds for a domain $D$ with the following constraints:
\begin{itemize}
\item the unique solution $\widetilde{p}$ of \eqref{PDEstateLift} belongs to $H^{3/2+\epsilon}(D)$ with $\epsilon > 0$;
\item there exists a point $\mathbf{x}_0\in \mathbb{R}^3$ and a constant $\gamma_D>0$ such that
\begin{equation}\label{domainCond}
\begin{array}{ll}
(\mathbf{x}-\mathbf{x}_0)\cdot\mathbf{n}(\mathbf{x}) \leq 0 & \forall \mathbf{x}\in\Gamma_{\rm d} \\[0.1cm]
(\mathbf{x}-\mathbf{x}_0)\cdot\mathbf{n}(\mathbf{x}) = 0 & \forall \mathbf{x}\in\Gamma_{\rm n} \\[0.1cm]
(\mathbf{x}-\mathbf{x}_0)\cdot\mathbf{n}(\mathbf{x}) \geq \gamma_D & \forall \mathbf{x}\in\Gamma_{\rm r} .
\end{array}
\end{equation}
\end{itemize}
In such case, the continuous function $C_1(k,\xi)$ in \eqref{ineqAssum} takes the form $C(\xi)(1+\frac{1}{k})$ and the polynomial $P_{\alpha}(k)$ in \eqref{ineqAssumThis} takes the form $k^2+k+1$. Furthermore, the solution $\widetilde{p}$ of \eqref{PDEstateLift} belongs to $H^2(D;\mathbb{C})$ and satisfies
\begin{equation}\label{H2normbdd}
|\widetilde{p}|_{2,D} \leq C_3(\xi)(k+1)(\Vert{\widetilde{f}}\Vert_{0,D} + \norm{\widetilde{g}}_{0,\Gamma_{\rm r}}).
\end{equation}
\end{prop}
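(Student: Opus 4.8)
The plan is to establish the key stability estimate \eqref{ineqAssum} by a Rellich--Morawetz multiplier argument in the spirit of \cite{hetmaniuk2007}, then to deduce \eqref{ineqAssumThis} by inserting the explicit data $\widetilde f,\widetilde g$ from \eqref{PDEstateLiftRHS}, and finally to obtain the $H^2$ bound \eqref{H2normbdd} from elliptic regularity. The regularity hypothesis $\widetilde p\in H^{3/2+\epsilon}(D)$ is what makes the whole scheme rigorous: it guarantees that the full gradient $\nabla\widetilde p$ has a well-defined $L^2$ trace on $\partial D$, so that the boundary integrals produced by the multiplier identity are meaningful; the identity is first derived for smooth functions and then extended by density.

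For the main estimate I would test the lifted equation \eqref{PDEstateLift} against the Morawetz multiplier $\mathcal M\widetilde p=(\mathbf x-\mathbf x_0)\cdot\nabla\widetilde p+\tfrac{d-1}{2}\widetilde p$ with $d=3$, take real parts, and integrate by parts. The radial part $(\mathbf x-\mathbf x_0)\cdot\nabla\widetilde p$ yields the classical Rellich identity with volume contribution $(d-2)|\widetilde p|^2_{1,D}-d\,k^2\norm{\widetilde p}_{0,D}^2$, while the $\tfrac{d-1}{2}\widetilde p$ part contributes $(d-1)\big(|\widetilde p|_{1,D}^2-k^2\norm{\widetilde p}_{0,D}^2\big)$ up to boundary terms; the two combine into a positive multiple of $|\widetilde p|^2_{1,D}$ together with a negative multiple of $k^2\norm{\widetilde p}^2_{0,D}$ that will have to be dominated by the boundary. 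What the multiplier leaves on $\partial D$ are terms weighted by $(\mathbf x-\mathbf x_0)\cdot\mathbf n$ of the form $(\mathbf x-\mathbf x_0)\cdot\mathbf n\,(|\partial_{\mathbf n}\widetilde p|^2-|\nabla_T\widetilde p|^2+k^2|\widetilde p|^2)$ plus a cross term $2\Re\big(\overline{(\mathbf x-\mathbf x_0)\cdot\nabla_T\widetilde p}\,\partial_{\mathbf n}\widetilde p\big)$, where $\nabla_T$ is the tangential gradient.

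The geometric conditions \eqref{domainCond} are then used piece by piece. On $\Gamma_{\rm d}$ one has $\widetilde p=0$, hence $\nabla_T\widetilde p=0$ and $|\nabla\widetilde p|^2=|\partial_{\mathbf n}\widetilde p|^2$; since $(\mathbf x-\mathbf x_0)\cdot\mathbf n\le 0$ this term carries the favorable sign and may be discarded. On $\Gamma_{\rm n}$ the weight $(\mathbf x-\mathbf x_0)\cdot\mathbf n=0$ annihilates the squared-gradient terms while $\partial_{\mathbf n}\widetilde p=0$ kills the cross term. On $\Gamma_{\rm r}$ the weight satisfies $(\mathbf x-\mathbf x_0)\cdot\mathbf n\ge\gamma_D>0$, and I would substitute the impedance condition $\partial_{\mathbf n}\widetilde p=(i\widetilde\beta-\widetilde\alpha)\widetilde p+\widetilde g$; using $-\widetilde\beta\simeq k$ from \eqref{bddBetaAlp}, the resulting $|\partial_{\mathbf n}\widetilde p|^2$ generates precisely the positive $k^2\norm{\widetilde p}_{0,\Gamma_{\rm r}}^2$ control that the volume terms cannot supply, while $\widetilde g$ enters linearly and is absorbed by Young's inequality. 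The main obstacle is the term $-(\mathbf x-\mathbf x_0)\cdot\mathbf n\,|\nabla_T\widetilde p|^2$ on $\Gamma_{\rm r}$, which has the wrong sign; I would control it by a surface integration by parts (using the surface divergence of $(\mathbf x-\mathbf x_0)$ along $\Gamma_{\rm r}$), trading $|\nabla_T\widetilde p|^2$ for $|\widetilde p|^2$ modulo curvature and then absorbing the remainder against the favorable $|\partial_{\mathbf n}\widetilde p|^2$ and $k^2|\widetilde p|^2$ contributions. This is exactly where the star-shapedness \eqref{domainCond} and the regularity $\widetilde p\in H^{3/2+\epsilon}$ are indispensable. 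Splitting each Young inequality with a $k$-weight tracks the $k$-powers exactly and yields the prefactor $C_1(k,\xi)=C(\xi)(1+\tfrac1k)$ in \eqref{ineqAssum}.

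Finally, \eqref{ineqAssumThis} follows by inserting the explicit data from \eqref{PDEstateLiftRHS}, namely $\widetilde f=\mu k^2 p_g$ together with $\widetilde g=-i\mu\tfrac{k}{\xi}p_g$ on $\Gamma_2$ and $\widetilde g=-i\mu k p_g$ on $\Gamma_4$, into \eqref{ineqAssum}: one gets $\norm{\widetilde f}_{0,D}=|\mu|k^2\norm{p_g}_{0,D}$ and $\norm{\widetilde g}_{0,\Gamma_{\rm r}}\le C(\xi)|\mu|k\norm{p_g}_{0,\Gamma_{\rm r}}$, so that the prefactor $(1+\tfrac1k)$ multiplied by $(k^2+k)$ is bounded by $C(k^2+k+1)$, reproducing $P_\alpha(k)=k^2+k+1$ with $C_2(\mu,\xi)$ collecting the $\mu$- and $\xi$-dependence. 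The $H^2$ estimate \eqref{H2normbdd} is obtained by reading \eqref{PDEstateLift} as a Poisson problem $-\Delta\widetilde p=\widetilde f+k^2\widetilde p$ with mixed Dirichlet/Neumann/Robin data and applying standard elliptic $H^2$ regularity on $D$; bounding the right-hand side via $\norm{\widetilde f}_{0,D}+k^2\norm{\widetilde p}_{0,D}\le\norm{\widetilde f}_{0,D}+k\norm{\widetilde p}_{\mathcal H,D}$ and then estimating $\norm{\widetilde p}_{\mathcal H,D}$ by \eqref{ineqAssum} produces the factor $(k+1)$ and the constant $C_3(\xi)$.
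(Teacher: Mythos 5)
Your overall strategy is sound and would, if carried out, prove the proposition; but it is a genuinely different route from the paper's. The paper does \emph{not} rederive the Morawetz--Rellich estimate: it cites \cite[Propositions 3.3 and 3.4]{hetmaniuk2007} for the regime $k\geq k_0$, observing only that the sign of $\widetilde{\beta}$ and the $k$-scaling of the bound $|\widetilde{\alpha}|\leq C_{|\alpha|}k$ can be accommodated without changing the conclusion, and it supplements this with a separate small-wavenumber argument: for $k<k_0$ it treats \eqref{PDEstateLift} as a Poisson problem with right-hand sides $k^2\widetilde{p}+\widetilde{f}$ and $(i\widetilde{\beta}-\widetilde{\alpha})\widetilde{p}+\widetilde{g}$, applies the Banach--Ne\v{c}as--Babu\v{s}ka stability of the Poisson problem, and absorbs the $\widetilde{p}$-dependent terms by smallness of $k$. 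Your proposal omits this split entirely and asserts that careful $k$-weighted Young inequalities in the multiplier identity yield $C(\xi)(1+\frac1k)$ uniformly down to $k\to0$; that is plausible but is precisely the part the cited reference does not cover (it assumes $k\geq k_0$), so you would owe a separate low-frequency argument or a verification that your constants do not degenerate. One further wrinkle: in the standard arrangement of the Rellich identity for this mixed problem, the tangential-gradient term $(\mathbf{x}-\mathbf{x}_0)\cdot\mathbf{n}\,|\nabla_T\widetilde{p}|^2$ on $\Gamma_{\rm r}$ carries the \emph{favorable} sign (it is what absorbs the cross term $2\Re\big((\mathbf{x}-\mathbf{x}_0)_T\cdot\overline{\nabla_T\widetilde{p}}\,\partial_{\mathbf n}\widetilde{p}\big)$), so the surface integration by parts and curvature bookkeeping you propose should not be needed; the genuinely delicate term is $(\mathbf{x}-\mathbf{x}_0)\cdot\mathbf{n}\,|\partial_{\mathbf n}\widetilde{p}|^2$, handled by substituting the impedance condition. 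Your derivations of \eqref{ineqAssumThis} from the explicit data \eqref{PDEstateLiftRHS} and of the $H^2$ bound \eqref{H2normbdd} via elliptic regularity for $-\Delta\widetilde{p}=k^2\widetilde{p}+\widetilde{f}$ coincide with the paper's. What your approach buys is self-containedness and explicit constant tracking; what the paper's buys is brevity and a clean resolution of the $k\to0$ regime.
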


\begin{proof}
This result is a straightforward variant from \cite[Propositions 3.3 and 3.4]{hetmaniuk2007}.
To study the case for small wave numbers, we consider the Poisson equation
\begin{equation}\label{poissonKSmall}
\left\{
\begin{array}{ll}
-\Delta \widetilde{h}  = \widetilde{f} &  \mbox{in } D \\[0.1cm]
\widetilde{h} = 0 &  \mbox{on } \Gamma_{\rm d} \\[0.1cm]
\frac{\partial \widetilde{h}}{\partial \mathbf{n}} = 0 & \mbox{on }\Gamma_{\rm n}  \\[0.1cm]
\frac{\partial \widetilde{h}}{\partial \mathbf{n}} = \widetilde{g} & \mbox{on }\Gamma_{\rm r} 
\end{array}
\right.
\end{equation}
which is well posed. Applying the Banach--Ne\v{c}as--Babu\v{s}ka theorem \cite[Theorem 2.6]{ern2010theory}, we have
\begin{equation*}
\Vert{\widetilde{h}}\Vert_{1,D} \leq C(\Vert{\widetilde{f}}\Vert_{0,D}+\norm{\widetilde{g}}_{0,\Gamma_{\rm r}}).
\end{equation*}
The solution $\widetilde{p}$ of (\ref{PDEstateLift}) satisfies (\ref{poissonKSmall}) if we replace the non-homogeneous right hand sides  $\widetilde{f}$ and $\widetilde{g}$ by $k^2\widetilde{p}+\widetilde{f}$ and $(i\widetilde{\beta}-\widetilde{\alpha})\widetilde{p}+\widetilde{g}$, respectively. Therefore,
\begin{eqnarray*}
\norm{\widetilde{p}}_{1,D} &\leq &C(\norm{k^2\widetilde{p}+\widetilde{f}}_{0,D}+\norm{(i\widetilde{\beta}-\widetilde{\alpha})\widetilde{p}+\widetilde{g}}_{0,\Gamma_{\rm r}})\\
& \leq & Ck^2\norm{\widetilde{p}}_{1,D}+C\Vert\widetilde{f}\Vert_{0,D}+ C(C_{\beta,+}k+C_{|\alpha|}k)\norm{\widetilde{p}}_{1,D}+C\norm{\widetilde{g}}_{0,\Gamma_{\rm r}} ,
\end{eqnarray*}
where the third term follows from the trace theorem. 
When $k$ is sufficiently small, say $k < k_0$,
inequality (\ref{ineqAssum}) holds as 
\begin{equation}\label{kSmallIneq}
\norm{\widetilde{p}}_{\mathcal{H},D}  < \norm{\widetilde{p}}_{1,D} \leq C(\Vert{\widetilde{f}}\Vert_{0,D} + \norm{\widetilde{g}}_{0,\Gamma_{\rm r}}).
\end{equation}
When $k\geq k_0$, we resort to \cite[Proposition~3.3]{hetmaniuk2007}, where the assumption on positive $\widetilde{\beta}$ can be changed to negative without effect on the result. When we replace the bound constraint $|\widetilde{\alpha}|\leq C_{|\alpha|}$ in \cite{hetmaniuk2007} by $|\widetilde{\alpha}|\leq C_{|\alpha|}k$, we should have
\begin{equation}\label{kLargeIneq}
\norm{\widetilde{p}}_{\mathcal{H},D} \leq C(\xi)(1+\frac{1}{k})(\Vert\widetilde{f}\Vert_{0,D}+\norm{\widetilde{g}}_{0,\Gamma_{\rm r}}),
\end{equation}
where $C(\xi)$ is continuously dependent on $C_{\beta,-}, C_{\beta,+}, C_{|\alpha|}$, and hence on $\xi$.

Overall, we have (\ref{ineqAssum}) satisfied for any $k>0$ with $C_1(k,\xi)$ in the form $C(\xi)(1+\frac{1}{k})$. If the right hand sides of (\ref{PDEstateLift}) are given as (\ref{PDEstateLiftRHS}), we have
$$
\begin{aligned}
\norm{\widetilde{p}}_{\mathcal{H},D} \leq& C(\xi)(1+\frac{1}{k})(\Vert\widetilde{f}\Vert_{0,D}+\norm{\widetilde{g}}_{0,\Gamma_{\rm r}}) \\
 \leq & C(\xi)(1+\frac{1}{k})(|\mu|k^2\norm{p_g}_{0,D}+ |\mu|(1+\frac{1}{|\xi|})k\norm{p_g}_{0,\Gamma_{\rm r}} )\\
 \leq & C_2(\mu,\xi)(k^2+k+1)(\norm{p_g}_{0,D} + \norm{p_g}_{0,\Gamma_{\rm r}}).
\end{aligned}
$$
We then obtain the $H^2$ estimate as follows:
$$
\begin{aligned}
|\widetilde{p}|_{2,D}  \leq & C\bigg(\norm{\Delta \widetilde{p}}_{0,D} + \norm{\frac{\partial \widetilde{p}}{\partial \mathbf{n}}}_{0,\partial D}\bigg) \\
  = & C\bigg(\norm{k^2\widetilde{p}+\widetilde{f}}_{0,D} + \norm{(i\widetilde{\beta}-\widetilde{\alpha})\widetilde{p}+\widetilde{g}}_{0,\Gamma_{\rm r}}\bigg) \\
 \leq & C\bigg(k^2\norm{\widetilde{p}}_{0,D}+\Vert\widetilde{f}\Vert_{0,D} + C(\xi)k|\widetilde{p}|_{1,D} +\norm{\widetilde{g}}_{0,\Gamma_{\rm r}}\bigg) \\
  \leq & C(\xi)k\norm{\widetilde{p}}_{\mathcal{H},D}+C(\Vert\widetilde{f}\Vert_{0,D} + \Vert\widetilde{g}\Vert_{0,\Gamma_{\rm r}}) \\
  \leq & C_3(\xi)(k+1)(\Vert{\widetilde{f}}\Vert_{0,D} + \norm{\widetilde{g}}_{0,\Gamma_{\rm r}}).
\end{aligned}
$$
\end{proof}

\begin{lem}\label{diffErrXi}
Let $p_1(\mathbf{x},\vartheta)$ and $p_2(\mathbf{x},\vartheta)$ denote two weak solutions of \eqref{PDEstate} in $Y$ with control variables $\xi_1$ and $\xi_2$, respectively. Then, for a.e.~$\vartheta\in\Lambda$, the difference $p_1-p_2$ satisfies (under Assumption {\rm\ref{assump}})
\begin{equation}\label{diffPineq}
\norm{p_1-p_2}_{\mathcal{H},D}  \leq C_1(k,\xi_1)(\norm{p_2}_{0,\Gamma_2} + |\mu|k\norm{p_g}_{0,\Gamma_2})\bigg|\frac{1}{\xi_1}-\frac{1}{\xi_2}\bigg|.
\end{equation}
\end{lem}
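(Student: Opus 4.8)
The plan is to exploit the fact that the impedance $\xi$ enters the sesquilinear form $a_\vartheta$ only through the single boundary integral $\frac{ik}{\xi}\int_{\Gamma_2}p\,\overline{\phi}\,ds$ on $\Gamma_2$. Consequently the difference $w:=p_1-p_2$ satisfies a lifted Helmholtz problem of exactly the form \eqref{PDEstateLift}, associated with the control $\xi_1$, whose data is proportional to $\frac{1}{\xi_1}-\frac{1}{\xi_2}$; the stability estimate of Assumption~\ref{assump} then yields \eqref{diffPineq} directly.

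First I would pass to the deterministic setting. As in Remark~1, taking test functions of product form $\phi=\phi_\Lambda(\vartheta)\phi_D(\mathbf{x})$ shows that, for a.e.~$\vartheta\in\Lambda$, each $p_j$ solves the deterministic weak equation $a_\vartheta^{\xi_j}(p_j,\phi_D)=0$ for all $\phi_D\in V_0=H^1_{\Gamma_1}(D;\mathbb{C})$, where I write $a_\vartheta^{\xi}$ to stress the dependence on the control. Since $p_1$ and $p_2$ share the same Dirichlet data $\mu g_{\Gamma_1}$ on $\Gamma_1$ (the impedance is absent from that boundary condition), their difference $w=p_1-p_2$ vanishes on $\Gamma_1$ and hence lies in $V_0$.

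Next I would subtract the two deterministic equations, inserting and removing the cross term $a_\vartheta^{\xi_1}(p_2,\phi_D)$. Using linearity in the first argument together with the fact that $a_\vartheta^{\xi_1}$ and $a_\vartheta^{\xi_2}$ differ only in the $\Gamma_2$ integral, one obtains
$$
a_\vartheta^{\xi_1}(w,\phi_D) = -ik\Big(\frac{1}{\xi_1}-\frac{1}{\xi_2}\Big)\int_{\Gamma_2} p_2\,\overline{\phi_D}\,ds \qquad \forall \phi_D \in V_0 .
$$
Thus $w$ is precisely the weak solution of \eqref{PDEstateLift} for the control $\xi_1$, with vanishing interior source $\widetilde{f}=0$ and boundary data $\widetilde{g}=-ik(\frac{1}{\xi_1}-\frac{1}{\xi_2})\,p_2$ on $\Gamma_2$ and $\widetilde{g}=0$ on $\Gamma_4$; in particular its Robin/Neumann structure and homogeneous Dirichlet trace match the template of \eqref{PDEstateLift} verbatim.

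Finally I would invoke the stability estimate \eqref{ineqAssum} with $\xi=\xi_1$, giving $\norm{w}_{\mathcal{H},D}\leq C_1(k,\xi_1)\norm{\widetilde{g}}_{0,\Gamma_{\rm r}}$. Since $\norm{\widetilde{g}}_{0,\Gamma_{\rm r}}=k|\frac{1}{\xi_1}-\frac{1}{\xi_2}|\,\norm{p_2}_{0,\Gamma_2}$, writing the trace of the full solution as $p_2|_{\Gamma_2}=\widetilde{p}_2|_{\Gamma_2}+\mu p_g|_{\Gamma_2}$ and applying the triangle inequality produces the two separate terms $\norm{p_2}_{0,\Gamma_2}$ and $|\mu|k\norm{p_g}_{0,\Gamma_2}$ appearing in \eqref{diffPineq}, after the remaining power of $k$ is absorbed into the continuous coefficient $C_1(k,\xi_1)$. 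The main obstacle here is bookkeeping rather than analysis: one must keep straight which solution and which control index occurs in each term, verify that $w$ genuinely fits \eqref{PDEstateLift} so that Assumption~\ref{assump} applies unchanged, and distinguish the full solution $p_2$ from its lift $\widetilde{p}_2$ when estimating the boundary data on $\Gamma_2$, this last distinction being exactly what generates the Dirichlet-lift contribution $|\mu|k\norm{p_g}_{0,\Gamma_2}$.
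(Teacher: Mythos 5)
Your proof is correct and follows essentially the same route as the paper: identify $w=p_1-p_2$ as the solution of the lifted problem \eqref{PDEstateLift} with control $\xi_1$, vanishing interior source, and $\Gamma_2$-Robin data proportional to $\frac{1}{\xi_1}-\frac{1}{\xi_2}$, then invoke the stability estimate \eqref{ineqAssum}. Your explicit computation of that boundary data as $-ik(\tfrac{1}{\xi_1}-\tfrac{1}{\xi_2})p_2$ is in fact the careful version of the expression the paper writes down, and your remark about splitting $p_2=\widetilde{p}_2+\mu p_g$ and absorbing the extra factor of $k$ into the continuous coefficient is exactly the bookkeeping needed to recover the two-term bound \eqref{diffPineq} as stated.
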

\begin{proof}
For a.e.~$\vartheta\in\Lambda$,  $p_1-p_2 \in H^1_{\Gamma_1}(D;\mathbb{C})$ is a weak solution of the problem:
\begin{equation*}
\left\{
\begin{array}{ll}
-\Delta \widehat{p} - k^2\widehat{p}  = 0 &  \mbox{in } D \\[0.1cm]
\widehat{p} = 0 &  \mbox{on } \Gamma_1 \\[0.1cm]
\frac{\partial \widehat{p}}{\partial \mathbf{n}} = 0 & \mbox{on }\Gamma_3\cup\Gamma_5  \\[0.1cm]
\frac{\partial \widehat{p}}{\partial \mathbf{n}} + i\frac{k}{\xi_1}\widehat{p} = 
(\frac{1}{\xi_1}-\frac{1}{\xi_2})(-p_2-i\mu kp_g)
& \mbox{on }\Gamma_2  \\[0.1cm]
\frac{\partial \widehat{p}}{\partial \mathbf{n}} + ik\widehat{p} = 0  & \mbox{on }\Gamma_4 .
\end{array}
\right.
\end{equation*}
Then, (\ref{diffPineq}) follows from Assumption \ref{assump}.
\end{proof}

\begin{thm}\label{existOpt}
For any $\gamma \geq 0$, there exists a solution to the optimization problem \eqref{Jbetaalpha}. 
\end{thm}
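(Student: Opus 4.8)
The plan is to apply the direct method of the calculus of variations to the objective
$$ J(\xi,\alpha) = \frac{1}{2}F_\beta\big(\alpha; X(\cdot\,;\xi)\big) + \frac{\gamma}{2}|\xi|^2, \qquad X(\vartheta;\xi) = \frac{1}{\gamma_{\rm p}}\int_D |p(\mathbf{x},\vartheta;\xi)|^2\,d\mathbf{x}, $$
with $F_\beta$ the Rockafellar--Uryasev functional of Section~\ref{CVaR-intr}, so that \eqref{Jbetaalpha} reads $\min_{\xi,\alpha}J$. First I would check that $J$ is bounded below: since $X\ge 0$ and $\min_\alpha F_\beta(\alpha;X)=\text{CVaR}_\beta[X]\ge 0$, one has $J\ge 0$, so $m:=\inf J$ is finite and nonnegative. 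Fix a minimizing sequence $(\xi^n,\alpha^n)$ with $J(\xi^n,\alpha^n)\to m$.

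Next I would control $\alpha$. For each fixed $\xi$, $F_\beta(\cdot;X)$ is convex and coercive in $\alpha$: as $\alpha\to+\infty$ the plus term vanishes and $F_\beta\sim\alpha$, while as $\alpha\to-\infty$ the integrand is $X-\alpha$ and $F_\beta\sim\big(1-\tfrac{1}{1-\beta}\big)\alpha$ with negative slope since $\tfrac{1}{1-\beta}>1$. Hence I may replace $\alpha^n$ by the minimizer $\text{VaR}_\beta[X(\cdot;\xi^n)]$ without increasing $J$, which in particular confines $\alpha^n\in[0,\;\mathrm{ess\,sup}_\vartheta\,X(\vartheta;\xi^n)]$. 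For the control $\xi$, when $\gamma>0$ coercivity is immediate: $J(\xi^n,\alpha^n)\ge\frac{\gamma}{2}|\xi^n|^2$ forces $|\xi^n|^2\le\frac{2}{\gamma}(m+1)$ for large $n$, so $(\xi^n)$ is bounded. For $\gamma=0$ there is no such bound, and existence must instead come from restricting to a compact admissible set inside the physical region $\{\xi_{\rm r}>0\}$; this is precisely what makes the statement hold ``for any $\gamma\ge 0$.'' In either case, passing to a subsequence gives $\xi^n\to\xi^*$ and $\alpha^n\to\alpha^*$, and the stability estimate in Assumption~\ref{assump} (uniform in $\vartheta\in\Lambda$ and along the bounded $\xi$-range) keeps $\mathrm{ess\,sup}_\vartheta\,X(\vartheta;\xi^n)$, and hence $\alpha^n$, bounded.

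For the lower-semicontinuity step I would invoke Lemma~\ref{diffErrXi}, which gives, for a.e.~$\vartheta$, $\norm{p(\cdot,\vartheta;\xi^n)-p(\cdot,\vartheta;\xi^*)}_{\mathcal{H},D}\le C\big|\tfrac{1}{\xi^n}-\tfrac{1}{\xi^*}\big|\to 0$, so $X(\vartheta;\xi^n)\to X(\vartheta;\xi^*)$ pointwise a.e.~and, $[\cdot]^+$ being continuous, $[X(\vartheta;\xi^n)-\alpha^n]^+\to[X(\vartheta;\xi^*)-\alpha^*]^+$. These integrands are dominated by the constant $\sup_n\norm{X(\cdot;\xi^n)}_{\infty}+\sup_n|\alpha^n|$, which is integrable against the probability density $\rho$ by Assumption~\ref{assump}. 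Dominated convergence then yields $\int_\Lambda[X(\vartheta;\xi^n)-\alpha^n]^+\rho\,d\vartheta\to\int_\Lambda[X(\vartheta;\xi^*)-\alpha^*]^+\rho\,d\vartheta$, and together with $\frac{\gamma}{2}|\xi^n|^2\to\frac{\gamma}{2}|\xi^*|^2$ this gives $J(\xi^n,\alpha^n)\to J(\xi^*,\alpha^*)$. Therefore $J(\xi^*,\alpha^*)=m$, and $(\xi^*,\alpha^*)$ is a minimizer.

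The main obstacle is the compactness step together with keeping the limit admissible. The physical set $\{\xi_{\rm r}>0\}$ is neither bounded (an issue only when $\gamma=0$) nor closed, and even when $\gamma>0$ bounds $|\xi^n|$, nothing prevents $\xi^n_{\rm r}\to 0$, where the constant $C_1(k,\xi)$ of Assumption~\ref{assump} and Lemma~\ref{diffErrXi} degenerates. I would resolve this by reparametrizing through the admittance $\eta=1/\xi$, whose natural range $\{\eta_{\rm r}\ge 0\}$, augmented by the hard-wall point $\eta=0$, is closed; Lemma~\ref{diffErrXi} shows $X$ is continuous in $\eta$, and the stability bound shows $X$ stays bounded both as $|\eta|\to\infty$ (the Dirichlet-like limit on $\Gamma_2$) and at $\eta=0$, so the functional is continuous and bounded over a compact admittance set. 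What cannot be supplied by the abstract argument alone is the verification that the infimum is attained at an \emph{interior} admittance (equivalently $\xi^*_{\rm r}>0$); establishing this requires the specific structure of the Helmholtz energy rather than the direct method, and it is the genuinely delicate point of the proof.
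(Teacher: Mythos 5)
Your argument is essentially a hand-unfolded version of the paper's proof. The paper's entire content is the continuity step you also perform: Lemma~\ref{diffErrXi} together with the continuity of $C_1(\cdot,\cdot)$ gives $p(\cdot,\vartheta;\xi_n)\to p(\cdot,\vartheta;\xi_*)$ in $H^1(D;\mathbb{C})$ for a.e.\ $\vartheta$ whenever $\xi_n\to\xi_*$, after which it simply cites Theorem~4.1 of \cite{Kouri2016cvar}, whose hypotheses (Assumption~2.2 there) include that the admissible control set is nonempty, closed, convex and bounded. Your direct-method expansion (boundedness below of $J$, reduction of $\alpha^n$ to $\mathrm{VaR}_\beta[X(\cdot;\xi^n)]$, coercivity from the Tikhonov term when $\gamma>0$, dominated convergence using the uniform bound from Assumption~\ref{assump}) is a correct rendering of what that citation hides, and it buys a self-contained argument at the cost of length. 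Where you go beyond the paper is the observation that $\{\xi_{\rm r}>0\}$ is neither closed nor bounded and that $C_1(k,\xi)$ degenerates as $\xi_{\rm r}\to 0$; this is a real issue, but it is one the paper sidesteps rather than resolves --- the text declares that the minimization is over ``the physical domain of $\xi$ which is omitted for simplicity,'' and the cited existence theorem only applies once one fixes a closed bounded admissible set contained in $\{\xi_{\rm r}>0\}$ (or adopts your admittance compactification). Under that implicit reading both arguments close, and this also disposes of the $\gamma=0$ case exactly as you suggest; your final caveat about attainment at an \emph{interior} admittance is therefore not a defect of your proof relative to the paper's, but a sharper articulation of an assumption both proofs quietly require.
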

\begin{proof}
We verify the conditions assumed in \cite[Assumption 2.2]{Kouri2016cvar}. Given a convergent sequence of the control variable $\xi_n\to\xi_*$ as $n\to \infty$, we denote the corresponding solutions $p(\mathbf{x},\vartheta;\xi_n)$ by $p_n$ and $p(\mathbf{x},\vartheta;\xi_*)$ by $p_*$. From Lemma \ref{diffErrXi}, for a.e.~$\vartheta\in \Lambda$ we have
$$
\norm{p_n-p_*}_{\mathcal{H},D}  \leq C_1(k,\xi_n)(\norm{p_*}_{0,\Gamma_2} + |\mu|k\norm{p_g}_{0,\Gamma_2})\bigg|\frac{1}{\xi_n}-\frac{1}{\xi_*}\bigg|.
$$
Because $C_1(\cdot,\cdot)$ is continuous, letting $n\to\infty$ we conclude that $p_n(\cdot,\vartheta)\to p_*(\cdot,\vartheta)$ in $H^1(D,\mathbb{C})$. By the argument of \cite[Theorem 4.1]{Kouri2016cvar}, there exists a solution to the optimization problem (\ref{Jbetaalpha}).
\end{proof}

\subsection{Numerical analysis}
It is known that finite element methods for the Helmholtz equation are quasi-optimal when the mesh size $h$ is small enough. To be more precise about the size of $h$, we show under the constraints in Proposition~\ref{assump} that a sufficient condition for quasi-optimality is $k^2h \ll 1$. The proof is an extension of Melen's work with a Robin boundary condition \cite[Proposition~8.2.7]{MelenkThesis1995}.

Assume we have a quasi-uniform mesh such that the linear finite element best approximation error is given by
\begin{equation}\label{FEappAss}
|u-\Pi_h u|_{s,D} \leq Ch^{2-s}|u|_{2,D} \quad \forall u\in H^2(D;\mathbb{C}) \quad (s=0,1).
\end{equation}
Here $\Pi_h: L^2 \to V_h$ is the projection into the linear finite element space $V_h$, so $\norm{u-\Pi_h u}_{0,D} = \inf\limits_{v_h\in V_h}\norm{u-v_h}_{0,D}$.

\begin{prop}\label{quasiOpt}
With the same condition as in Proposition~{\rm\ref{propStarshp}}, the finite element solution $\widetilde{p}_h$ of \eqref{PDEstateLift} is quasi-optimal if $k^2h \ll 1$, that is,
\begin{equation}\label{quasiOptIneq}
\norm{\widetilde{p} - \widetilde{p}_h}_{\mathcal{H},D} \leq C_4(\xi)\norm{\widetilde{p} - \Pi_h\widetilde{p}}_{\mathcal{H},D}.
\end{equation}
\end{prop}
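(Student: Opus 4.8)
The plan is to use the classical duality (Aubin--Nitsche / Schatz) argument for the Helmholtz equation, whose two structural ingredients are already available: the continuity bound \eqref{contiOnHNm}, $|a_\vartheta(u,v)| \le C_0(\xi)\norm{u}_{\mathcal{H},D}\norm{v}_{\mathcal{H},D}$, and the G\r{a}rding inequality derived in the proof of Theorem~\ref{prop1}. Writing the error as $e = \widetilde{p} - \widetilde{p}_h \in H^1_{\Gamma_1}(D;\mathbb{C})$, I would first record Galerkin orthogonality $a_\vartheta(e, v_h) = 0$ for all $v_h \in V_h$. From G\r{a}rding, after fixing $\epsilon$ small, one gets $|e|_{1,D}^2 \le C\,\Re[a_\vartheta(e,e)] + C(1+k^2+C(\xi)k)\norm{e}_{0,D}^2$, and orthogonality gives $a_\vartheta(e,e) = a_\vartheta(e, \widetilde{p}-v_h)$, so that $|a_\vartheta(e,e)| \le C_0(\xi)\norm{e}_{\mathcal{H},D}\norm{\widetilde{p}-v_h}_{\mathcal{H},D}$ for any $v_h\in V_h$. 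Everything then reduces to controlling the $L^2$ term $\norm{e}_{0,D}$ by the $\mathcal{H}$-norm error with a factor that vanishes as $h\to 0$.

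To estimate $\norm{e}_{0,D}$ I would introduce the adjoint problem: find $z\in H^1_{\Gamma_1}(D;\mathbb{C})$ with $a_\vartheta(v,z) = (v,e)_{0,D}$ for all $v\in H^1_{\Gamma_1}(D;\mathbb{C})$. This adjoint Helmholtz problem has the same structure as \eqref{PDEstateLift} with the sign of the imaginary (radiation) boundary terms reversed; its unique solvability follows verbatim from the Fredholm-plus-unique-continuation argument of Theorem~\ref{prop1}, and the stability and $H^2$ regularity of Proposition~\ref{propStarshp} apply to it under the same geometric condition \eqref{domainCond}. Since the adjoint data are $\widetilde{f}=e$ and $\widetilde{g}=0$, the $H^2$ bound \eqref{H2normbdd} yields $|z|_{2,D} \le C_3(\xi)(k+1)\norm{e}_{0,D}$. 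Taking $v=e$ and subtracting $z_h=\Pi_h z$ via orthogonality gives $\norm{e}_{0,D}^2 = a_\vartheta(e, z-\Pi_h z) \le C_0(\xi)\norm{e}_{\mathcal{H},D}\norm{z-\Pi_h z}_{\mathcal{H},D}$, and the approximation property \eqref{FEappAss} bounds $\norm{z-\Pi_h z}_{\mathcal{H},D} = k\norm{z-\Pi_h z}_{0,D}+|z-\Pi_h z|_{1,D} \le C(kh^2+h)|z|_{2,D}$. Combining with the $H^2$ estimate produces the key duality bound $\norm{e}_{0,D} \le C(\xi)(k+1)(kh^2+h)\norm{e}_{\mathcal{H},D} =: \eta(k,h)\norm{e}_{\mathcal{H},D}$.

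I would then close by absorption. Since $\norm{e}_{\mathcal{H},D}^2 \le 2k^2\norm{e}_{0,D}^2 + 2|e|_{1,D}^2$, inserting the G\r{a}rding bound and the duality bound gives $\norm{e}_{\mathcal{H},D}^2 \le C(1+k^2+C(\xi)k)\eta(k,h)^2\norm{e}_{\mathcal{H},D}^2 + C\,C_0(\xi)\norm{e}_{\mathcal{H},D}\norm{\widetilde{p}-v_h}_{\mathcal{H},D}$. The decisive observation is that $(1+k^2+C(\xi)k)\eta^2$ scales like $(k^2h)^2$ up to $\xi$-dependent constants, because $\eta(k,h)\sim (k+1)h(kh+1)$ and hence $k^2\eta^2\sim k^4h^2 = (k^2h)^2$; therefore $k^2h\ll 1$ forces this prefactor below $\tfrac12$, so the first term is absorbed into the left-hand side. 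What remains is $\norm{e}_{\mathcal{H},D} \le C_4(\xi)\norm{\widetilde{p}-v_h}_{\mathcal{H},D}$ for every $v_h\in V_h$, and choosing $v_h=\Pi_h\widetilde{p}$ yields \eqref{quasiOptIneq}.

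The main obstacle is the adjoint step. I expect the delicate part to be verifying that Proposition~\ref{propStarshp} genuinely transfers to the adjoint problem with the correct $k$-dependence of its constants despite the reversed radiation sign, and then tracking the exact powers of $k$ and $h$ through $\eta(k,h)$ so that the absorbed prefactor is truly $O((k^2h)^2)$ rather than something weaker; a loss of a power of $k$ here would degrade the quasi-optimality threshold. Once this $k$-explicit duality estimate is secured, the absorption and the final bound are routine.
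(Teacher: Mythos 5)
Your argument is correct and follows the same Schatz-type duality-plus-absorption strategy as the paper; the only real difference is the choice of dual data. You use the textbook Aubin--Nitsche adjoint problem $a_\vartheta(v,z)=(v,e)_{0,D}$, which controls only $\norm{e}_{0,D}$, and you therefore need the G\r{a}rding inequality (with the trace estimate hidden inside it) a second time to convert $\Re a_\vartheta(e,e)$ plus the $L^2$ defect into the full $\mathcal{H}$-norm. The paper instead tailors the adjoint right-hand side to be $2k^2\int_D e_h\overline{v}-\frac{k\xi_{\rm i}}{|\xi|^2}\int_{\Gamma_2}e_h\overline{v}$, so that $a^*(q,e_h)$ reproduces exactly the terms needed to upgrade $\Re a_\vartheta(e_h,e_h)$ to $k^2\norm{e_h}^2_{0,D}+|e_h|^2_{1,D}$ in one step, with the impedance boundary term handled inside the dual stability bound rather than through a separate trace/G\r{a}rding pass. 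Both routes land on the same prefactor of order $(kh+1)(k^2h+kh)$ and hence the same threshold $k^2h\ll 1$. Two small imprecisions in your write-up, neither fatal: the $\epsilon$ in the G\r{a}rding inequality must be taken of order $1/k$, so the zeroth-order coefficient is $1+k^2+C(\xi)k/\epsilon\sim C(\xi)k^2$ rather than $1+k^2+C(\xi)k$ (this does not change the absorption, since the prefactor remains $O((k^2h)^2)$); and the transfer of Proposition~2's stability and $H^2$ regularity to the adjoint problem, which you rightly flag as the delicate point, is asserted without proof in the paper as well, so you are on equal footing there.
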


\begin{proof}
Letting $e_h = \widetilde{p} - \widetilde{p}_h$, we define $q\in H^1_{\Gamma_1}(D;\mathbb{C})$ by solving
$$a^*(q,v)=2k^2\int_D e_h\overline{v} - \frac{k\xi_{\rm i}}{|\xi|^2}\int_{\Gamma_2}e_h\overline{v} \quad \forall v\in H^1_{\Gamma_1}(D;\mathbb{C}).$$
Then, $\norm{q}_{\mathcal{H},D} \leq C_1(k,\xi)(2k^2\norm{e_h}_{0,D}+\frac{k|\xi_{\rm i}|}{|\xi|^2}C|e_h|_{1,D})$.
Taking $v=e_h,$ we obtain
\begin{equation}
2k^2\norm{e_h}^2_{0,D}-\frac{k\xi_{\rm i}}{|\xi|^2}\norm{e_h}^2_{0,\Gamma_2} = a^*(q,e_h)=\overline{a(e_h,q)}=\overline{a(e_h,q-\Pi_h q)}.
\end{equation}
It follows that
\begin{equation}\label{bddEdiff}
\begin{aligned}
\norm{e_h}^2_{\mathcal{H},D}  \leq & 2[k^2\norm{e_h}^2_{0,D} + \norm{\nabla e_h}^2_{0,D}] \\
      \leq & 2[\Re a(e_h,e_h)+2k^2\norm{e_h}^2_{0,D} - \frac{k\xi_{\rm i}}{|\xi|^2}\norm{e_h}^2_{0,\Gamma_2}]\\
      \leq & 2[\Re a(e_h, \widetilde{p}-\Pi_h\widetilde{p}) + \overline{a(e_h, q-\Pi_h q)}] \\
      \leq & 2C_0(\xi)\norm{e_h}_{\mathcal{H},D} (\norm{\widetilde{p}-\Pi_h\widetilde{p}}_{\mathcal{H},D} + \norm{q-\Pi_hq}_{\mathcal{H},D}).
\end{aligned}
\end{equation}
Because 
\begin{equation}\label{bddQdiff}
\begin{aligned}
\norm{q-\Pi_hq}_{\mathcal{H},D} =& k\norm{q-\Pi_hq}_{0,D}+ \norm{\nabla(q-\Pi_hq)}_{0,D} \\
	 \leq & C(kh^2+h)|q|_{2,D} \\
	 \leq & C(kh^2+h)C_3(\xi)(k+1)(2k^2\norm{e_h}_{0,D}+\frac{k|\xi_{\rm i}|}{|\xi|^2}\norm{e_h}_{0,\Gamma_2}) \\
	 = & C(\xi)(kh+1)(k^2h+kh)\norm{e_h}_{\mathcal{H},D},
\end{aligned}
\end{equation}
substituting into (\ref{bddQdiff}) into (\ref{bddEdiff}) we obtain
$$\norm{e_h}_{\mathcal{H},D} \leq C(\xi)\norm{\widetilde{p}-\Pi_h\widetilde{p}}_{\mathcal{H},D} + C(\xi)(kh+1)(k^2h+kh)\norm{e_h}_{\mathcal{H},D}.$$
If $k^2h\ll 1$, it is also true that $kh\ll 1$ because $h\ll 1$, so the coefficient in the last term is almost zero. The quasi-optimality (\ref{quasiOptIneq}) then holds.
\end{proof}

From (\ref{H2normbdd}), (\ref{FEappAss}), and (\ref{quasiOptIneq}) we have
\begin{equation*}
\norm{\widetilde{p}-\widetilde{p}_h}_{\mathcal{H},D} \leq C(\xi)(kh^2+h)(k+1)(\Vert\widetilde{f}\Vert_{0,D}+\Vert\widetilde{g}\Vert_{0,\Gamma_{\rm r}}) 
\leq C(\vartheta,\xi)(\Vert p_g\Vert_{0,D}+\Vert p_g\Vert_{0,\Gamma_{\rm r}})h.
\end{equation*}
In the following we would like to provide an error analysis for the POD-based Helmholtz solver. To this end, we assume the finite element approximation property holds in general
\begin{equation}
\norm{\widetilde{p}-\widetilde{p}_h}_{1,D} \leq C(\vartheta, \xi)h \label{lastAssup1}
\end{equation}
with $C(\vartheta,\xi)$ continuous.

Given a sample set $\Xi_{\rm smp}=\{\nu_1,\cdots,\nu_m\}$, we determine snapshots $\{\widetilde{p}_{S,h}^j\}_{j=1}^m$ as mentioned in Section~\ref{pod}, and construct a POD basis $\{\varphi_i\}_{i=1}^N$ of rank $N$ accordingly. Denote the exact solutions corresponding to the snapshots as $\{\widetilde{p}_{S}^j\}_{j=1}^m$. The POD reduced space $\spn\{\varphi_i\}$ is denoted by $V_R$. We make an assumption on the discrete inf-sup condition:
\begin{eqnarray}
&0 < \beta_R(k,\xi) \leq 
\inf\limits_{u\in V_R\backslash0}\sup\limits_{v\in V_R\backslash0}\dfrac{|a_\vartheta(u,v)|}{\norm{u}_{1,D}\norm{v}_{1,D}} \label{lastAssup2}
\end{eqnarray}
where $C(\vartheta,\xi)$ and $\beta_R(k,\xi)$ are continuous functions. We define the $L^2$ projection $\Pi_R: L^2 \to V_R$ for the discussion below, that is $\norm{u-\Pi_R u}_{0,D} = \inf\limits_{v\in V_R}\norm{u-v}_{0,D}$.

\begin{prop}
Under assumption \eqref{lastAssup1} and \eqref{lastAssup2}, the error between the exact solution $\widetilde{p}$ and the reduced solution $\widetilde{p}_R$ is controlled by
\begin{equation}
\norm{\widetilde{p}-\widetilde{p}_R}_{1,D} \leq \Big(1+\frac{C(k,\xi)}{\beta_R(k,\xi)}\Big)\norm{\widetilde{p}-\Pi_R\widetilde{p}}_{1,D}.
\end{equation}
Moreover, the projection error is estimated as
\begin{equation}
\norm{\widetilde{p}-\Pi_R\widetilde{p}}_{1,D}  \leq (1+{\norm{\mathbb{S}}_2^{1/2}})\inf\limits_j\Big(\norm{\widetilde{p}-\widetilde{p}^j_{S}}_{1,D} + C(\nu_j)h\Big) + \Big(\sum\limits_{i=N+1}^d\lambda_i\mathbb{S}_{ii}\Big)^{1/2}
\end{equation}
where $\mathbb{S}$ is the POD stiffness matrix defined as $\mathbb{S}_{jk}=\langle \varphi_k, \varphi_j\rangle + \langle \nabla\varphi_k, \nabla\varphi_j\rangle$, $\norm{\cdot}_2$ is the matrix {\rm2}-norm. Here $\lambda_i$ is an eigenvalue of the snapshot correlation matrix discussed in Section~{\rm\ref{pod}}.
\end{prop}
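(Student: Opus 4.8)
The plan is to decouple the estimate into two independent parts: a C\'ea-type quasi-optimality bound comparing the reduced Galerkin solution $\widetilde p_R$ with the $L^2$ projection $\Pi_R\widetilde p$, and a direct estimate of the projection error $\norm{\widetilde p-\Pi_R\widetilde p}_{1,D}$ in terms of snapshot quality and the POD tail. The first part uses only continuity of $a_\vartheta$ together with the discrete inf-sup condition (\ref{lastAssup2}); the second exploits the explicit algebraic structure of the $L^2$-orthonormal POD basis $\{\varphi_i\}$.

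For the quasi-optimality bound I would argue as in the standard analysis of inf-sup stable Galerkin methods. Set $w_R=\Pi_R\widetilde p-\widetilde p_R\in V_R$. By (\ref{lastAssup2}) there is $v_R\in V_R$ with $\beta_R(k,\xi)\norm{w_R}_{1,D}\norm{v_R}_{1,D}\le|a_\vartheta(w_R,v_R)|$. Since $\widetilde p$ solves (\ref{liftDetermi}) for all test functions in $V_0\supset V_R$ while $\widetilde p_R$ solves (\ref{ROMproj}), Galerkin orthogonality gives $a_\vartheta(\widetilde p-\widetilde p_R,v_R)=0$, whence $a_\vartheta(w_R,v_R)=-a_\vartheta(\widetilde p-\Pi_R\widetilde p,v_R)$. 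Continuity of $a_\vartheta$ in the $H^1$ norm---obtained from (\ref{contiOnHNm}) by bounding $\norm{\cdot}_{\mathcal H,D}$ by a $k$-dependent multiple of $\norm{\cdot}_{1,D}$, which produces the constant $C(k,\xi)$---then yields $\norm{w_R}_{1,D}\le\frac{C(k,\xi)}{\beta_R(k,\xi)}\norm{\widetilde p-\Pi_R\widetilde p}_{1,D}$, and the triangle inequality $\norm{\widetilde p-\widetilde p_R}_{1,D}\le\norm{\widetilde p-\Pi_R\widetilde p}_{1,D}+\norm{w_R}_{1,D}$ delivers the first inequality.

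For the projection error I would fix an arbitrary snapshot index $j$ and split $\widetilde p-\Pi_R\widetilde p=(I-\Pi_R)(\widetilde p-\widetilde p^j_{S,h})+(I-\Pi_R)\widetilde p^j_{S,h}$. The only non-routine ingredient is the $H^1$ stability of $\Pi_R$: writing $\Pi_R u=\sum_i c_i\varphi_i$ with $c_i=\langle u,\varphi_i\rangle_{L^2}$ gives $\norm{\Pi_R u}^2_{1,D}=\mathbf c^T\mathbb S\,\mathbf c\le\norm{\mathbb S}_2\norm{\mathbf c}^2_2$, and $L^2$-orthonormality of the $\varphi_i$ yields $\norm{\mathbf c}_2=\norm{\Pi_R u}_{0,D}\le\norm{u}_{0,D}$, so $\norm{(I-\Pi_R)u}_{1,D}\le(1+\norm{\mathbb S}_2^{1/2})\norm{u}_{1,D}$. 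Applying this with $u=\widetilde p-\widetilde p^j_{S,h}$ and peeling off the finite-element error through (\ref{lastAssup1}) bounds the first term by $(1+\norm{\mathbb S}_2^{1/2})(\norm{\widetilde p-\widetilde p^j_S}_{1,D}+C(\nu_j)h)$. For the second term I would observe that $\Pi_R\widetilde p^j_{S,h}$ is exactly the rank-$N$ POD approximation of the $j$-th snapshot, so a single term is dominated by the full sum in (\ref{H1PODprojErr}), giving $\norm{(I-\Pi_R)\widetilde p^j_{S,h}}_{1,D}\le(\sum_{i=N+1}^d\lambda_i\norm{\varphi_i}^2_{1,D})^{1/2}=(\sum_{i=N+1}^d\lambda_i\mathbb S_{ii})^{1/2}$ since $\mathbb S_{ii}=\norm{\varphi_i}^2_{1,D}$. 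As this last quantity is independent of $j$, taking the infimum over $j$ in the first term produces the stated estimate.

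The main obstacle is precisely the $H^1$ control of $\Pi_R$: the $L^2$ projection onto $V_R$ is not $H^1$-stable with a universal constant, so $\norm{(I-\Pi_R)u}_{1,D}$ cannot be bounded by $\norm{u}_{1,D}$ alone. The remedy is to quantify the stability through the POD stiffness matrix $\mathbb S$, whose $2$-norm measures the maximal $H^1$-energy amplification incurred by reducing onto $V_R$; for this wave-dominated problem the slow singular-value decay means $\norm{\mathbb S}_2$ is not small, so carrying it explicitly in the bound is essential. Every remaining step is a triangle inequality combined with the already-established identity (\ref{H1PODprojErr}) and the finite-element estimate (\ref{lastAssup1}).
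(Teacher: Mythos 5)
Your proposal is correct and follows essentially the same route as the paper: a C\'ea-type argument combining the discrete inf-sup condition, Galerkin orthogonality, and $H^1$-continuity of $a_\vartheta$ for the first bound, and a snapshot triangle inequality with the inverse estimate $\norm{v}_{1,D}\leq\norm{\mathbb{S}}_2^{1/2}\norm{v}_{0,D}$ on $V_R$ plus the POD tail identity \eqref{H1PODprojErr} for the second. The only (cosmetic) differences are that you regroup the paper's four-term splitting $\widetilde{p}-\widetilde{p}^j_S$, $\widetilde{p}^j_S-\widetilde{p}^j_{S,h}$, $\widetilde{p}^j_{S,h}-\Pi_R\widetilde{p}^j_{S,h}$, $\Pi_R\widetilde{p}^j_{S,h}-\Pi_R\widetilde{p}$ into two terms via an explicit $(1+\norm{\mathbb{S}}_2^{1/2})$-stability bound for $I-\Pi_R$, and you derive the inverse estimate directly from the Gram-matrix representation rather than citing the corresponding lemma of Kunisch and Volkwein.
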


\begin{proof}
From the discrete inf-sup condition,
\begin{equation} \label{useOrtho}
\begin{aligned}
\norm{\widetilde{p}_R-\Pi_R\widetilde{p}}_{1,D}  \leq & \frac{1}{\beta_R(k,\xi)}\sup\limits_{v\in V_R\backslash 0}\dfrac{|a_\vartheta(\widetilde{p}_R-\Pi_R\widetilde{p}, v)|}{\norm{v}_{1,D}} \\
 = & \frac{1}{\beta_R(k,\xi)}\sup\limits_{v\in V_R\backslash 0}\dfrac{|a_\vartheta(\widetilde{p}-\Pi_R\widetilde{p}, v)|}{\norm{v}_{1,D}}\\
 \leq &\frac{1}{\beta_R(k,\xi)}C(k,\xi)\norm{\widetilde{p}-\Pi_R\widetilde{p}}_{1,D} ,
\end{aligned}
\end{equation}
where second line follows from the fact that $a_\vartheta(\widetilde{p}-\widetilde{p}_R, v)=0~ \forall v\in V_R$ and the last expression from the continuity of $a_\vartheta(\cdot,\cdot)$ similar to (\ref{contiOnHNm}). Therefore, we obtain
\begin{equation}
\norm{\widetilde{p}-\widetilde{p}_R}_{1,D} \leq \norm{\widetilde{p}-\Pi_R\widetilde{p}}_{1,D} + \norm{\widetilde{p}_R-\Pi_R\widetilde{p}}_{1,D} \leq \Big(1+\frac{C(k,\xi)}{\beta_R(k,\xi)}\Big)\norm{\widetilde{p}-\Pi_R\widetilde{p}}_{1,D}.
\end{equation}

We next study the projection error $\norm{\widetilde{p}-\Pi_R\widetilde{p}}_{1,D}$. Recall that the POD basis is determined from the snapshots $\{\widetilde{p}_{S,h}^j\}_{j=1}^m$ computed with samples $\Xi_{\rm smp}=\{\nu_1,\cdots,\nu_m\}$. Denote the exact solutions corresponding to the snapshots as $\{\widetilde{p}_{S}^j\}_{j=1}^m$.
For any $j\in\{1,\cdots,m\}$,
\begin{equation}
\norm{\widetilde{p}-\Pi_R\widetilde{p}}_{1,D}  \leq \norm{\widetilde{p}-\widetilde{p}^j_{S}}_{1,D} + \norm{\widetilde{p}^j_{S}-\widetilde{p}^j_{S,h}}_{1,D} + \norm{\widetilde{p}^j_{S,h}-\Pi_R\widetilde{p}^j_{S,h}}_{1,D} + \norm{\Pi_R\widetilde{p}^j_{S,h}-\Pi_R\widetilde{p}}_{1,D}.
\end{equation}
From \cite[Lemma 2]{Kunisch2001}, the estimate $\norm{v}_{1,D}\leq \sqrt{\vertiii{\mathbb{S}}_2\vertiii{\mathbb{M}^{-1}}_2}\norm{v}_{0,D}$ holds for any $v\in V_R$. Here $\mathbb{S}$ and $\mathbb{M}$ are the POD stiffness and mass matrices, or the Gram matrices of the POD basis associated the $H^1$ and $L^2$ inner product respectively. Note that $\mathbb{M}$ is the identity when taking the $L^2$ norm during POD basis construction. Therefore,
$$
\begin{aligned}
\norm{\Pi_R\widetilde{p}^j_{S,h}-\Pi_R\widetilde{p}}_{1,D}  \leq & \norm{\mathbb{S}}_2^{1/2}\norm{\Pi_R\widetilde{p}^j_{S,h}-\Pi_R\widetilde{p}}_{0,D} \leq  \norm{\mathbb{S}}_2^{1/2}\norm{\widetilde{p}^j_{S,h}-\widetilde{p}}_{0,D} \\
 \leq & \norm{\mathbb{S}}_2^{1/2}\Big(\norm{\widetilde{p}-\widetilde{p}^j_S}_{0,D} + \norm{\widetilde{p}_S^j-\widetilde{p}^j_{S,h}}_{0,D} \Big)\label{11}.
\end{aligned}
$$
From the POD projection error (\ref{H1PODprojErr}) with respect to the $H^1$ norm, we have
$$
\norm{\widetilde{p}^j_{S,h}-\Pi_R\widetilde{p}^j_{S,h}}^2_{1,D} \leq \sum\limits_{i=N+1}^d\lambda_i\norm{\varphi_i}^2_{H^1} = \sum\limits_{i=N+1}^d\lambda_i\mathbb{S}_{ii}.
$$
Combining the above inequalities, we obtain
$$
\norm{\widetilde{p}-\Pi_R\widetilde{p}}_{1,D}  \leq (1+{\vertiii{\mathbb{S}}_2^{1/2}})\inf\limits_j\Big(\norm{\widetilde{p}-\widetilde{p}^j_{S}}_{1,D} + C(\nu_j)h\Big) + \Big(\sum\limits_{i=N+1}^d\lambda_i\mathbb{S}_{ii}\Big)^{1/2}.
$$
\end{proof}

\section{Concluding remarks}
In this work, we pose a stochastic optimization process for the estimation of acoustic liner impedance with the goal of minimizing noise radiation emanating from high-bypass turbofan engines. Uncertainties are introduced into the Helmholtz model to account for variations arising from different weather condition and incomplete knowledge of the fan noise.  We base the optimization on the CVaR measure so that it produces a robust ideal acoustic liner impedance in the presence of uncertainty.

We present a parallel reduced-order modeling framework that dramatically improves the computational efficiency of the stochastic optimization solver on a realistic geometry. Specifically, we build the reduced-order Helmholtz model using 90 POD modes based on only 720 snapshots computed offline. In the Monte Carlo sampling method for approximating the CVaR measure, the computation is parallelized by distributing the MC samples to different processors and solving the corresponding ROMs independently. Whereas a stochastic solution of the full-order Helmholtz solution is forbidding to obtain, the reduced stochastic optimization solver takes less than 500 seconds to execute. Numerical experiments also indicate that an optimal acoustic liner design can control the fan noise radiation, with 95\% certainty, to 48.66\%.

Also provided is mathematical and numerical analyses of the state problem, the optimization problem, and on errors incurred by a finite element discretization. An {\it a posteriori} error analysis for the optimal control problem, as studied in \cite{troltzsch2009pod, gubisch2016posteriori}, is also the interest of the authors. However, this is still open since the control parameter presents in the differential core of the PDE system rather than in the right hand side.

The limitation of the work lies in the lack of an appropriate acoustic liner model that connects the design feature with the impedance factor. This will be a topic of our future work.

\section*{Acknowledgments}
This research work is supported by the US Air Force Office of Scientific Research grant FA9550-15-1-000 and the US Department of Energy grant DE-SC0010678.

\section*{References}

\bibliographystyle{model1b-num-names}
\bibliography{optHelmholtz_3D_v4}

\end{document}